\newtheorem{theorem}{Theorem}[section]
\newtheorem{lem}[theorem]{Lemma}
\newtheorem{prop}[theorem]{Proposition}
\newtheorem{cor}[theorem]{Corollary}
\newtheorem{example}[theorem]{Example}
\theoremstyle{remark}
\numberwithin{equation}{section} \errorcontextlines=0
\newcommand{\la}{\lambda}
\begin{document}

\title[Spin characters of generalized symmetric groups]
{Spin characters of generalized symmetric groups}
\author{Xiaoli Hu}
\address{Hu: School of Sciences,
South China University of Technology, Guangzhou 510640, China}
\email{xiaolihumath@163.com}
\author{Naihuan Jing}
\address{Jing: Department of Mathematics,
   North Carolina State University,
   Raleigh, NC 27695, USA
   }
\email{jing@math.ncsu.edu}
\keywords{wreath products, spin groups, projective characters}
\thanks{*Corresponding author: Naihuan Jing}
\keywords{wreath products, spin groups, character table}
\subjclass[2000]{Primary: 20C25; Secondary: 20C30, 20E22}

\begin{abstract}
In 1911 Schur computed the spin character values of the symmetric group using two important
ingredients: the first one later became famously known as the Schur Q-functions
and the second one was certain creative construction of the projective
characters on Clifford algebras.
In the context of the McKay correspondence and affine Lie algebras, the first part was
generalized to all wreath products by the vertex operator calculus in \cite{FJW} where a large part of the character table
was produced. The current paper generalizes the second part and provides the missing projective character values
for the wreath product of the symmetric group with a finite abelian group. Our approach relies on Mackey-Wigner's
little groups to construct irreducible modules. In particular, projective modules and spin character values of
all classical Weyl groups are obtained.
\end{abstract}
\maketitle

\section{Introduction}\label{intro}

The spin group $\widetilde{S}_n$ is a double cover of the symmetric
group $S_n$. 
In the seminal paper \cite{S} Schur generalized Frobenius theory and determined
all irreducible projective characters
of the symmetric group $S_n$ by introducing a new family of
symmetric functions later known as Schur Q-functions. These
symmetric functions play the same role for the spin group $\widetilde{S}_n$ as Schur functions do for
the symmetric group $S_n$. Schur showed further that though
the projective character values are for the most part given by Schur Q-functions, a significant portion
was provided by special spin modules and Clifford algebras.

After the classical work of Frobenius and Schur, irreducible characters of the general wreath products
${\Gamma}_n=\Gamma\wr S_n$ were constructed by Specht in his dissertation \cite{Sp}. The generalized symmetric groups
were also studied by Osima in \cite{O}, and Zelevinsky \cite{Ze} investigated the Hopf algebra structure of the
Grothendieck groups for all ${\Gamma}_n$.

During the last several decades there has been a resurgence of activities on the spin group.
Stembridge \cite{St} gave a combinatorial definition of Schur Q-functions, Sergeev
\cite{Ser} found that the hyperoctahedral group of the symmetric group has a similar character
theory, J\'ozefiak \cite{Jo} gave a modern account of Schur's work using superalgebras,
Nazarov \cite{Na} constructed all irreducible representations of the spin group,
Hoffman and Humphreys \cite{HH} also used Zelevinsky's method to study the
double covering groups  $\widetilde{\Gamma}_n$, and the second author \cite{J} provided a vertex operator approach to Schur Q-functions as well as projective character values.
Breakthroughs were also made on modular projective representations of the symmetric groups \cite{BK} \cite{K} (see also \cite{Be}).

On the other hand, recognizing the deep connection with the McKay correspondence, I. Frenkel, Jing and Wang \cite{FJW}
generalized the first part of Schur's work and determined all irreducible characters of
the spin wreath product $\widetilde{\Gamma}_n$
of a finite group $\Gamma$
and the symmetric group $S_n$.
When $\Gamma$ is a finite cyclic group, they are double covering groups of the generalized
symmetric groups, which include hyperoctahedral groups as special cases
when $\Gamma$ is of order 2.
In Schur's original work on $\widetilde{S}_n$, the projective characters of $S_n$ are parameterized by strict partitions. Again in the wreath products, the projective representations are in one to one correspondence to strict partition valued
functions or strict colored partitions.

It is well-known that the
projective character table of $\Gamma_n$ consists of the character values
on the so-called split conjugacy classes which can be divided
into two subsets: the {\it even} conjugacy classes corresponding to partition valued functions with
odd integer parts and the {\it odd} conjugacy classes corresponding to odd partition
valued functions with distinct parts.
In \cite{FJW} the authors determined all irreducible characters of spin wreath products by vertex operator calculus
and also showed that the character values at all odd conjugacy classes
are given by matrix coefficients of products of twisted vertex operators, thus solved
a big chunk of the character table.
It seems that the character values on odd strict colored partitions are beyond the reach of vertex operators. Later in \cite{AM} \cite{MJ} spin characters for
generalized symmetric groups were also considered using combinatorial methods and
certain basic spin character values were computed. However the character values on
odd strict partition valued functions are still unknown, as the method associated with the McKay correspondence and vertex representations seems
not suitable for computing this part of the character table. Knowledge of this
will be useful in representation theory
as they include practically all double coverings of Weyl groups of classical types.

Spin character values have been studied in the physics literature as well. In \cite{RW1} it was observed that {plethysms} play an important role in determining characters for spin characters of  SO$(n, \mathbb C)$ and
the spin group $\widetilde{S}_n$. This was later generalized to spin groups associated to orthogonal and symplectic Weyl groups \cite{RW2} and new algorithms were developed for computing the spin character values of Weyl groups.

The purpose of this paper is to obtain the missing part of the
character table of spin wreath products $\widetilde{\Gamma}_n$ for
the cases of an abelian group $\Gamma$. We construct all irreducible
characters by certain induced representations of Young subgroups of
$\widetilde{\Gamma}_n$ using the Mackey-Wigner method of little groups
(cf. \cite{Serre}).
Then we compute the
 spin character tables of the wreath products
$\widetilde{\Gamma}_n$.
In particular this includes, in principle, the
irreducible spin character values of Weyl groups of all classical types.

In the viewpoint of the new form of the McKay correspondence \cite{FJW} the problem of determining all
spin wreath products 
$\mathbb Z_{r+1}\wr \widetilde{S}_n$
amounts to a realization of the twisted affine Lie algebra
$A_r^{(1)}[-1]$. On the other hand,  Ariki \cite{A} has shown that the Grothendieck group of the category of
modules for the cyclotomic Hecke algebra $H_q(\mathbb Z_{r+1}\wr S_n)$ realizes the dual canonical basis
for the quantum affine Lie algebra $U_q(A_r^{(1)})$, which in turn gives
the decomposition matrix for the modular representations of the symmetric groups
by the Lascoux-Leclerc-Thibon algorithm
\cite{LLT}. These are partly the reasons that we study spin representations of
generalized symmetric groups in this paper, besides historic interest.

The paper is organized as follows. In the first two sections we discuss the basic notions
of the wreath products and the Grothendieck group of projective representations of the
wreath products. The twisted products of two spin modules are thoroughly reviewed and
special attention is paid to the case of cyclic groups. In section three we first
recall the basic spin representations and then use the Mackey-Wigner method of little groups
to decompose the orbits of Young subgroups. We construct all spin irreducible representations
indexed by strict partition valued functions, and then we show that the character values
are sparsely zero and the non-zero values are
given according to how the partitions are supported on various conjugacy classes.

\section{The spin  wreath products $\widetilde{\Gamma}_n$ .}\label{sec:1}
\subsection{The spin group $\widetilde{S}_n$.}
The spin group $\widetilde{S}_n$ is the finite group generated by
$z$ and $t_i, ~(i=1, \cdots, n-1)$ with the defining relations:
\begin{align}\label{eq:def4spin}
& z^2=1, \ \ t_i^2=(t_it_{i+1})^3=z,\\
& t_it_j=zt_jt_i, \ \ |i-j|>1,\\
& zt_i=t_iz.
\end{align}
The group $\widetilde{S}_n$ is a central extension of $S_n$ by the cyclic
group $\mathbb{Z}_2$, as the map $\theta_n$ sending
$t_i$ to the transposition $(i,i+1)$ and $z$ to 1
is a homomorphism from $\widetilde{S}_n$ to $S_n$. In fact
Schur \cite{S} has shown that the spin group $\widetilde{S}_n$ is
one of the two non-trivial double covers of the symmetric group $S_n$ ($n\geq 4$ but $n\neq 6$).

We recall the Conway cycle presentation for $\widetilde{S}_n$ \cite{Ws}.
For each $k\in \{1, \cdots , n\}$, let $x_k=t_kt_{k+1}\cdots t_n
\cdots t_{k+1}t_k\in \widetilde{S}_{n+1}$ . For distinct integers $ i_1,
\cdots, i_m \in \{1, 2, \cdots,
n\}$, we define the cycle $[i_{1}i_{2}\cdots i_{m}]$ by
\begin{equation}
[i_{1}i_{2}\cdots i_{m}]=\left\{\begin{array}{lc}
z,   &\mbox{if} ~m=1, \\
x_{i_{1}}x_{i_{m}}x_{i_{m-1}}\cdots x_{i_{1}}, &\ \ \ \ \ \  \mbox{if} ~
1<m\leq n.
\end{array}
\right.
\end{equation}
 It is easy to see that $\theta_n([i_{1}i_{2}\cdots
i_{m}])=(i_{1}i_{2}\cdots i_{m})$, $\theta_{n+1}(x_i)=(i,n+1)$. Therefore
each element of $\widetilde{S}_n$ is of the form
$$z^p[i_1 \cdots i_m][j_1\cdots j_k]\cdots,$$
where $\{i_1 \cdots i_m\}$, $\{j_1 \cdots
j_k\}$, $\cdots$ is a partition of the set $\{1,2,\cdots,n \}$ and
$p\in \mathbb{Z}_2$. 

Let $\lambda=(\lambda_1, \cdots, \lambda_l)$ be a partition of the
positive integer $n$.
We identify $\lambda$ with its Ferrers diagram which
is formed by the array of $n$ dots having $l$ left-justified rows with row $i$
containing
$\la_i$ dots for $1\leq i\leq l.$
A Young tableau $T_{\lambda}$ of shape $\lambda$ is a numbering of the dots of
the Ferrers diagram by $1,2,\cdots, |\lambda|$. For such a
tableau $T_{\lambda}$ of shape $\lambda$ with the numbering
$a_{ij}$ for the $(i, j)$-dot,
we define the element $t_{\lambda}=[a_{11}\cdots
a_{1\lambda_1}][a_{21}\cdots a_{2\lambda_2}]\cdots [a_{l1} \cdots
a_{l\lambda_l}]$ of $\widetilde{S}_n$. We also denote its image in $S_n$ by
\begin{equation*}
\sigma_{\lambda}
=\theta_n(t_{\lambda})=\prod_{i=1}^l(a_{i1}\cdots a_{i\lambda_i})\in S_n.
\end{equation*}

\subsection{The spin group $\widetilde{\Gamma}_n$.}
We will mainly consider the case of the cyclic group
$\Gamma=\langle a|a^{r+1}=1\rangle\simeq \mathbb Z_{r+1}$. But for the most part of this section
we allow $\Gamma$ to be a general finite group. We
denote by $\Gamma_*$ the set of
conjugacy classes of $\Gamma$ and $\Gamma^*=\{\gamma_i|~
i=0,\cdots,r\}$ the set of irreducible characters of $\Gamma$ with
$\gamma_0$ being the trivial character.

Let $\zeta_c$ be the order of the centralizer of an element in the class $c\in
\Gamma_*$, then the order of $c$ is $|\Gamma|/\zeta_c$. When $\Gamma=\mathbb Z_{r+1}$,
$\zeta_c=r+1$ and $|c|=1$ for any $c\in\Gamma_*$.

For $n\in\mathbb Z_+$, let $\Gamma^n$ be the direct product $\Gamma\times\cdots\times
\Gamma$, where $\Gamma^0=1$. The spin group
$\widetilde{S}_n$ acts on $\Gamma^n$ via the permutation action of $S_n$, thus
\begin{equation}
\begin{split}
t_{\la}(g_1,\cdots,g_n)=&(g_{\sigma_{\la}^{-1}(1)},\cdots,g_{\sigma_{\la}^{-1}(n)}),\\
z(g_1,\cdots,g_n)=&(g_1,\cdots,g_n).
\end{split}
\end{equation}
The spin wreath product $\widetilde{\Gamma}_n$ 
is the semi-direct product
$$\widetilde{\Gamma}_n=\Gamma^n\ltimes \widetilde{S}_n=\{(g,t)|g=(g_1,\cdots,g_n)\in\Gamma^n, t\in\widetilde{S}_n\}$$
with the multiplication
$$(g,t)\cdot(h,s)=(gt(h),ts).$$
 Similarly, $\Gamma_n$ is
defined to be  the semi-direct product of $\Gamma^n$ by $S_n$. It
is known that $\widetilde{\Gamma}_n$ is a central extension of
$\Gamma_n$ by $\mathbb{Z}_2$, thus
$|\widetilde{\Gamma}_n|=2n!|\Gamma|^n.$

Let $d$ be the {\it parity} homomorphism from the spin group $\widetilde{S}_n$ to the
group $\mathbb{Z}_2$  by
\begin{equation}
d(t_i)=1~(i=1,\cdots,n-1),\ \ \ \ d(z)=0.
\end{equation}
Similarly, we define a {\em parity} for
$\widetilde{\Gamma}_n$ by
\begin{equation}
d(g,t_i)=1 ~(i=1,\cdots,n-1),\ \ \ d(g,z)=0.
\end{equation}

\subsection{Partition valued functions.}

We recall some basic notions of partitions to describe conjugacy classes
of $\Gamma_n$.
Let $\la=(\la_1,\la_2,\cdots,\la_l)$ be a partition of $n$ with
$\lambda_1 \geq\cdots\geq \lambda_l\geq 1.$ We denote by $l=l(\la)$
the length of the partition $\la$ and set
$|\la|=\la_1+\cdots+\la_l.$ Sometimes we write
$\la=(1^{m_1}2^{m_2}3^{m_3}\cdots)$, where $m_i$ is the multiplicity of $i$ among
the parts of $\la$.

 Given a
finite set $X$, let
$\rho=(\rho(x))_{x\in X}$ be a family of partitions indexed by
$X$, we denote by $l(\rho)=\sum_{x\in X}l(\rho(x))$ the length
of $\rho$ and by $||\rho||=\sum_{x\in
X}|\rho(x)|$ the sum of parts of $\rho$, and then
$\rho=(\rho(x))_{x\in X}$ is called a {\it partition valued function}
on $X.$ Let $ \mathcal
{P}(X)$ be the set of all partitions indexed by $X$ and
$\mathcal{P}_n(X)$ the set of all partitions in $\mathcal {P}(X)$
such that $||\rho||=n.$  For two partition valued functions
$\rho=(\rho(x))_{x\in X}$ and $\sigma=(\sigma(x))_{x\in X}$,
we define the {\it union} of $\rho\cup\sigma$ to be the
partition valued function given by $(\rho\cup\sigma)(x)=\rho(x)\cup\sigma(x)$.
Here the union of two ordinary partitions is taken to be the juxtaposition of
two partitions with their parts rearranged.
Subsequently, $||\rho\cup\sigma||=||\rho||+||\sigma||$
and $l(\rho\cup\sigma)=l(\rho)+l(\sigma)$. A partition valued function is said to be
{\it decomposable} if it is a (non-trivial) union of two or more partition valued functions.

A partition $\la=(\la_1,\la_2,\cdots,\la_l)$ is called {\em
{strict}} if $\la_i\neq \la_j$ for $i\neq j$. We denote by $\mathcal {SP}(X)$ the set of
partition valued functions $(\rho(x))_{x\in X}$ in $ \mathcal
{P}(X)$ where each partition $\rho(x)$ is strict. Let $\mathcal
{OP}(X)$ be the set of partition valued functions $(\rho(x))_{x\in
X}$ in $ \mathcal {P}(X)$ such that all parts of the partitions
$\rho(x)$ are odd integers.

For each partition $\la$ we define the {\em parity} $d(\la)=|\la|-l(\la)$.
Similarly, for a partition valued function $\rho=(\rho(x))_{x\in
X}$, we define $d(\rho)= ||\rho||-l(\rho)$. Then $\rho$ is {\it even} (or {\it odd}) if
$d(\rho)$ is even (or odd). We let ${\mathcal{P}}_n^0(X)$ (or ${\mathcal{P}}_n^1(X)$) to be the collection
of even (or odd) partition valued functions on $X$.

As convention we set $\mathcal {SP}_n^i(X)=\mathcal
{P}_n^i(X)\cap \mathcal {SP}(X)$ and $\mathcal {OP}_n(X)=\mathcal
{P}_n(X)\cap \mathcal {OP}(X)$ for $i\in \{0,1\}$. For simplicity,
$\mathcal{P}(X)$ will be simply written as $\mathcal {P}$ when $X$
consists of a single element. Similarly we have notations
such as $\mathcal {OP}$, $\mathcal {SP}$, $\mathcal {OP}_n$ and $\mathcal
{SP}_n^i$.

\subsection{Split conjugacy classes of $\Gamma^n\ltimes S_{\mu}$.}
We first recall the parametrization of conjugacy classes of $\Gamma_n$ by partition valued functions. For
an element $(g, \sigma)\in \Gamma_n$, write the permutation $\sigma$ as
a product of disjoint cycles. For each cycle
$(i_1i_2\cdots i_k)$ inside $\sigma$, we associate the cycle-product $g_{i_k}g_{i_{k-1}}\cdots
g_{i_1}\in \Gamma$. Now for each conjugacy class $c$ let $m_k(c)$ be the multiplicity of $k$
such that the cycle product $\in c$. The resulted partition valued function $\rho\in\mathcal P(\Gamma_*)$, where $\rho(c)=(1^{m_1(c)}2^{m_1(c)}\cdots)$,
determines the conjugacy class of $(g, \sigma)$ completely \cite{M}.

For a partition $\mu$ of $n$ 
we define the Young subgroup of $S_n$ to be
$$S_{\{1,\cdots,\mu_1\}}\times \cdots\times S_{\{\mu_1+\cdots+\mu_{s-1}+1, \cdots, \mu_1+\cdots+\mu_s\}},$$
which will be abbreviated as $S_{\mu_1}\times\cdots\times S_{\mu_s}$. Similarly $\Gamma^n \ltimes (S_{\mu_1} \times S_{\mu_2}\times
\cdots \times S_{\mu_s})\simeq \Gamma_{\mu_1}\times\cdots \times\Gamma_{\mu_s}:=\Gamma_{\mu}$ is a subgroup of $ \Gamma_n$, also
called the Young subgroup of $\Gamma_n$ associated to $\mu$.

Now we discuss the parametrization of conjugacy classes
generated by Young subgroups.  Let $x=(g,\omega)$ be an
element of the Young subgroup
$\Gamma_{\mu_1}\times \Gamma_{\mu_2}\times\cdots \times
\Gamma_{\mu_s}$, 
where $x=x_1\cdots x_s$ and $x_i=(g^{(i)},\sigma_i)\in \Gamma_{\mu_i}$.
Let $\rho^i$ be the partition valued function on $\Gamma_*$ given by the congugacy class of $x_i$, thus $||\rho^i||=\mu_i$. We remark that if $x_i$ is viewed as an element of $\Gamma_n$, then the conjugacy class of $x_i$ corresponds to the
partition valued function $\rho^i\cup (1^{n-\mu_i})$. Then
$\rho=\rho^1\cup \rho^2\cup \cdots \cup \rho^s$
will be the partition valued function of $(g, \omega)$.
In this way we define a bijection $\phi$ from the decomposable
partition valued functions $\rho=\rho^1\cup\cdots\cup\rho^s$ such that $||\rho^i||=\mu_i$ to the conjugacy classes of
$x=(g,\omega)$ in $\Gamma_{\mu_1}\times
\Gamma_{\mu_2}\times\cdots \times \Gamma_{\mu_s}$.

For
$\rho=(\rho(c))_{c\in\Gamma_*}\in \mathcal {P}_n(\Gamma_*)$, let $C_{\rho}$ be the corresponding conjugacy class in $\Gamma_n$. Let $c^0,\cdots,c^r$ be the conjugacy classes
of $\Gamma$, here $c^0=\{1\}$, the trivial class. Let
$T_{\rho(c^i)}$ be the standard Young tableau such that the numbers $
\sum_{j=1}^{i-1}|\rho(c^j)|+1,\cdots,\sum_{j=1}^{i}|\rho(c^j)|$ are
placed in the Young diagram of shape
$\rho(c^i)=(\rho(c^i)_1,\cdots,\rho(c^i)_l)$ from the left to the right and
from the first row to
the last row. Then we get
\begin{equation}\label{eq:preimage}
\begin{split}t_{\rho(c^i)}=&[a_{i-1}+1,\cdots,
a_{i-1}+\rho(c^i)_1]\cdots \\
&[a_{i-1}+\rho(c^i)_1+\cdots+\rho(c^i)_{l-1},\cdots, a_{i-1}+|\rho(c^i)|],
\end{split}
\end{equation}
where $a_{i-1}=\sum_{j=0}^{i-1}|\rho(c^j)|$. Finally, we  define $t_{\rho}=t_{\rho(c^0)}t_{\rho(c^1)}\cdots
t_{\rho(c^r)}$ in $\widetilde{S}_n$. For any permutation $\sigma\in S_n$,
we also define $t_{\rho}^{\sigma}$ to be the
element obtained from $t_{\rho}$ by
permuting the natural numbering by $\sigma$. Thus the general element of $\widetilde{\Gamma}_n$ is of
the form $(g,z^pt^{\sigma}_{\rho})$, where $\rho$ is the type of the
conjugacy class of $(g,z^pt^{\sigma}_{\rho})$ and $\sigma\in S_n$.

 An element $\widetilde{x}\in\widetilde{\Gamma}_n$ is called
{\em non-split} if $\widetilde{x}$ is conjugate to $z\widetilde{x}$.
Otherwise $\widetilde{x}$ is said to be {\em split}. A conjugacy
class of $\widetilde{\Gamma}_n$ is called split if its elements are
split. Correspondingly an element $x\in\Gamma_n$ is called split if
$\theta^{-1}_n(x)$ is split. Therefore a conjugacy class $C_{\rho}$
of $\Gamma_n$ splits if and only if the preimage
$\theta_n^{-1}(C_{\rho})\triangleq D_{\rho}$ splits into two
conjugacy classes in $\widetilde{\Gamma}_n$. Read has proved that
the preimage
$\theta_n^{-1}(C_{\rho})$ splits into two conjugacy classes in
$\widetilde{\Gamma}_n$ if and only if $\rho\in \mathcal
{OP}_n(\Gamma_*)$ or $\rho\in \mathcal {SP}_n^1(\Gamma_*)$
(cf. \cite{FJW}).
 For each split
conjugacy class $C_{\rho}$ in $\Gamma_n$, we define the conjugacy
class $D_{\rho}^{+}$ in $\widetilde{\Gamma}_n$ to be the conjugacy
class containing the element $(g,t_{\rho})$ and define
$D_{\rho}^{-}=zD_{\rho}^{+}$, then $ D_{\rho}=D_{\rho}^{+}\cup
D_{\rho}^{-}$.

For a partition $\la=(1^{m_1}2^{m_2}3^{m_3}\cdots)$ of $n$, we denote by
$z_{\la}=\prod_{i\geq 1}i^{m_i}m_i!$ the order of the centralizer of
an element with cycle type $\la$ in $S_{n}.$
For each partition valued function
$\rho=(\rho(c))_{c\in\Gamma_*}$, we find that
\begin{equation}
Z_{\rho}=\prod_{c\in\Gamma_*}z_{\rho(c)}\zeta_c^{l(\rho(c))}
\end{equation} is the order of the centralizer of an element of
conjugacy type $\rho=(\rho(c))_{c\in\Gamma_*}$ in $\Gamma_n$. Correspondingly
the order of the centralizer of an element of conjugacy type $\rho$ in $\widetilde{\Gamma}_n$ is given by
\begin{equation}\label{eq:1}
\widetilde{Z}_{\rho}=\left\{ \begin{aligned}
         2Z_{\rho}, & \ \ \ C_{\rho} ~\hbox{is split,}  \\
                  Z_{\rho},& \ \ \ C_{\rho} ~\hbox{is non-split.}
                          \end{aligned} \right.
                          \end{equation}
Following the usual definition \cite{S}
a representation $\pi$ of $\widetilde{\Gamma}_n$ is called {\em
spin} if $\pi(z)=-1$. In particular, the character values of a
spin representation are determined by its values on the split
classes, since in that case,
$Tr(\pi(z\widetilde{x}))=-Tr(\pi(\widetilde{x}))=0$ whenever
$\widetilde{x}$ and $z\widetilde{x}$ are conjugate in
$\widetilde{\Gamma}_n$.

Let $(-1)^d$ be the sign representation of $\widetilde{\Gamma}_n$:
$\widetilde{x}\longrightarrow (-1)^{d(\widetilde{x})}.$  When $(-1)^d\pi\simeq\pi$ we call
$\pi$  a {\em double spin} representation of $\widetilde{\Gamma}_n$. If
$\pi^{'}=(-1)^d\pi\ncong \pi,$ then $\pi^{'}$ and $\pi$ are called
a pair of {\em associate spin} representations of $\widetilde{\Gamma}_n$.

\section{Twisted Grothendieck groups}

In this section we recall some fundamental facts about supermodules, spin super functions and the irreducible spin
 characters of $\widetilde{S}_n$, then we study the spin
representations of $\Gamma^n \ltimes \widetilde{S}_{\mu}$, which is a double cover
of $\Gamma_{\mu}$.
\subsection{Supermodules}

Let $\mathbb{C}[\widetilde{\Gamma}_n]$ be the group algebra of $\widetilde{\Gamma}_n,$ then $\mathscr{A}_n=\mathbb{C}[\widetilde{\Gamma}_n]/(1+z)$ becomes a $\mathbb{Z}_2$-graded algebra by setting
$deg(t_{i})=1,(i=1,\cdots,n-1)$. A $\widetilde{\Gamma}_n$-module $V$ is called a {\it spin module}
if $z$ acts as $-id_V$, then $V$ can also be viewed as
an $\mathscr{A}_n$-module. Conversely any $\mathscr{A}_n$-module
is also a spin module of $\widetilde{\Gamma}_n$. If $V=V_0\oplus V_1$ and $\mathscr{A}_n^iV_j\subset V_{i+j}$,
where $\mathscr{A}_n^i$ is the $i$th homogeneous subspace, then $V$ is called a supermodule.
It is well-known that complex simple superalgebras have only two types \cite{Jo}:

(1) Type $M$. The superalgebra $M(r|s)$ is equal to the matrix algebra $Mat(r+s, r+s)$, where
the matrices are partitioned as $2\times 2$ block matrices so that the main diagonals are $r\times r$ and $s\times s$ sub-matrices. The subspace $M(r|s)_0$ consists of diagonal block matrices, and the
 subspace $M(r|s)_1$ is equal to the space of off-diagonal block matrices.

(2) Type $Q$. The superalgebra $Q(n)$ is the subalgebra of $M(n|n)$ formed by block matrices
with both equal diagonal block matrices and skew diagonal block matrices.

Furthermore, $\mathscr{A}_n$ is semisimple, so it is a direct product of finitely many simple superalgebras.
A supermodule is said to be of type $M$ (or $Q$) if it is a multiple of one minimal left superideal of $M(r|s)$
(or $Q(n)$).
Subsequently any finite dimensional $\mathbb{C}[\Gamma_n]$-supermodule is
isomorphic to a direct sum of simple supermodules of type $M$ or type $Q$.

If $V$ is a double spin irreducible $\mathbb C[\widetilde{\Gamma}_n]$-module,
then $V$ is already an $\mathscr{A}_n$-supermodule by the inherited action.
If $V$ is an irreducible associate spin $\mathbb C[\widetilde{\Gamma}_n]$-module, then
$D(V)=V\oplus V'$ becomes an irreducible $\mathscr{A}_n$-supermodule where
$D(V)_0=\{(v, v)|v\in V\}$, $D(V)_1=\{(v, -v)|v\in V\}$ and
the action is induced from that of the ordinary module, i.e.
$g^{(i)}(u, v)=(g^{(i)}u, (-1)^ig^{(i)}v)$ for $g^{(i)}\in\mathscr{A}_n^{(i)}$, the degree
$i$-subspace of $\mathscr{A}_n^{(i)}$.

In the following we will use supermodules to compute irreducible characters.
The underlying principle is that an irreducible (spin) supermodule of $\widetilde{\Gamma}_n$
remains irreducible as an (spin) module when it is of type $M$ or decomposes into two
irreducible (spin) modules when it is of type $Q$, and any
irreducible spin module can be realized in this way.

 \subsection{The space $ {R}^-(\widetilde{\Gamma}_n)$}

A spin class function on $\widetilde{\Gamma}_n$ is a class function
from $\widetilde{\Gamma}_n$ to $\mathbb{C}$ such that
$f(zx)=-f(x)$, thus spin class functions vanish on non-split
conjugacy classes. A spin {\it super} class function on
$\widetilde{\Gamma}_n$ is a spin class function $f$ on
$\widetilde{\Gamma}_n$ such that $f$ vanishes further on odd strict
conjugacy classes. Let $ {R}^{-}(\widetilde{\Gamma}_n)$ be the
$\mathbb{C}$-span of spin
super class functions on
$\widetilde{\Gamma}_n$.

The twisted product $\widetilde{\Gamma}_l\tilde{\times}\widetilde{\Gamma}_m$ is
equal to $\widetilde{\Gamma}_l\times\widetilde{\Gamma}_m$ as a set but with
the multiplication
$$(t,t^{'})(s,s^{'})=(tsz^{d(t^{'})d(s)},t^{'}s^{'}),$$
where $s,t\in \widetilde{\Gamma}_l,$ $s^{'},t^{'}\in
\widetilde{\Gamma}_m$ are homogeneous. We define the spin direct
product \cite{FJW} of $\widetilde{\Gamma}_l$ and $\widetilde{\Gamma}_m$ by
\begin{equation}\label{s_n}
\widetilde{\Gamma}_l \hat{\times}\widetilde{\Gamma}_m=\widetilde{\Gamma}_l\tilde{\times}\widetilde{\Gamma}_m/\{(1,1),(z,z)\},
\end{equation}
which can be embedded into the spin group $\widetilde{\Gamma}_{l+m}$ canonically by letting
\begin{equation}
(t_i^{'},1)\longmapsto t_i,\ \ \ \ (1,t_j^{''})\longmapsto t_{l+j},
\end{equation}
where $t_i^{'}\in \widetilde{\Gamma}_l \,(i=1,\cdots,l-1), t_j^{''}\in \widetilde{\Gamma}_m
\,(j=1,\cdots,m-1)$. We identify $\widetilde{\Gamma}_l\hat{\times}\widetilde{\Gamma}_m$
with its image in $\widetilde{\Gamma}_{l+m}$ and regard it as a subgroup of $\widetilde{\Gamma}_{l+m}$.

We remarked earlier that we would study spin modules via supermodules \cite{Jo}. The following exposition
of twisted Grothendieck rings of supermodules follows \cite{FJW} closely. For two spin supermodules $U$ and $V$ of $\widetilde{\Gamma}_{l}$ and $\widetilde{\Gamma}_m$, we define the
super (outer)-tensor product $U{\hat{\otimes}} V$
by
$$(t,s)(u{\hat{\otimes}
} v)=(-1)^{d(s)d(u)}(tu{\hat{\otimes}} sv),$$
where $s$ and $u$ are homogeneous elements. Then $U\hat{\otimes}V$ is a spin
$\widetilde{\Gamma}_l\hat{\times}\widetilde{\Gamma}_m$-supermodule.
Moreover, let $U$ and $V$ be irreducible supermodules for $\widetilde{\Gamma}_{l}$ and $\widetilde{\Gamma}_m$ respectively, then

 (1) if both $U$ and $V$ are of type $M$, then $U\hat{\otimes} V$ is a simple $\widetilde{\Gamma}_l\hat{\times}\widetilde{\Gamma}_m$
 -supermodule of type $M;$

 (2) if  $U$ and $V$ are of different types, then $U\hat{\otimes }V$ is a simple
  $\widetilde{\Gamma}_l\hat{\times}\widetilde{\Gamma}_m$-supermodule of type $Q;$

 (3) if both $U$ and $V$ are of type $Q$,
  then $U\hat{\otimes} V\simeq N\oplus N$ for some simple $\widetilde{\Gamma}_l\hat{\times}\widetilde{\Gamma}_m$-supermodules $N$ of type $M.$

The irreducible summands in case (3) are more subtle as ordinary irreducible modules. In fact we have the following
result. First of all, if $V$ is an irreducible double spin module, then $V$ is {\it a priori}
an irreducible supermodule
of type M (we still use the same symbol for the supermodule). If $V$ is an irreducible associate spin module, then $D(V)=V\oplus V'$ is an irreducible supermodule
of type Q.  The following result is mostly from \cite{Jo}.

\begin{prop}\label{tensorprod}
Let $f_1$ and $f_2$ be the spin characters afforded by an irreducible
$\widetilde{\Gamma}_m$-module $V_1$ and an $\widetilde{\Gamma}_n$-module $V_2$ respectively.

(i) If both $V_i$ are double spin, then the tensor product $V_1\hat{\otimes}V_2$ is
irreducible both as a supermodule and as an ordinary module
for $\widetilde{\Gamma}_m\hat{\times}\widetilde{\Gamma}_n$.

(ii) If $V_1$ is double spin and $V_2$ is associate spin,
then the tensor product $V_1\hat{\otimes}D(V_2)$ is irreducible as a $\widetilde{\Gamma}_m\hat{\times}\widetilde{\Gamma}_n$-supermodule and decomposes
into $V_1\circledast V_2\oplus (V_1\circledast V_2)'$ as an ordinary module, where $(V_1\circledast V_2)'$
is the associated module of the irreducible module $V_1\circledast V_2$.

(iii) If both $V_i$ are associate spin,
then the tensor product $D(V_1)\hat{\otimes}D(V_2)$ decomposes
into $W\oplus W$, where $W$ is an irreducible $\widetilde{\Gamma}_m\hat{\times}\widetilde{\Gamma}_n$-supermodule
of type M. Set $W=V_1\circledast V_2$ when it is viewed as an ordinary irreducible module
(up to isomorphism), then the character $f_1\circledast f_2$ of the irreducible summand $V_1\circledast V_2$ satisfies that
\begin{align*}
f_1\circledast f_2(x_1, x_2)=\begin{cases} 2(\sqrt{-1})^{d(x_1)d(x_2)}f_1(x_1)f_2(x_2) & \mbox{both $f_i$ are associate spin,}\\
f_1(x_1)f_2(x_2) & \mbox{otherwise,}
\end{cases}
\end{align*}
\end{prop}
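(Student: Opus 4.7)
The proposition follows a two-step strategy: first establish the structural decomposition via the three-case classification of simple superalgebras recalled just before the statement, and then extract the ordinary character by taking traces on the supertensor.

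In case (i) both $V_i$ are double spin, hence simple supermodules of type $M$, so by part (1) of the classification $V_1\hat\otimes V_2$ is a simple supermodule of type $M$ and, as a consequence, remains irreducible as an ordinary $\widetilde{\Gamma}_m\hat\times\widetilde{\Gamma}_n$-module. In case (ii) the supertensor $V_1\hat\otimes D(V_2)$ is of mixed type, which by part (2) is a simple supermodule of type $Q$; its restriction to an ordinary module is the associate pair, which we identify with $V_1\circledast V_2\oplus (V_1\circledast V_2)'$. In case (iii) both $D(V_i)$ are of type $Q$, so by part (3) we have $D(V_1)\hat\otimes D(V_2)\cong N\oplus N$ with $N$ simple of type $M$, and setting $V_1\circledast V_2:=N$ as an ordinary irreducible module is well-defined up to isomorphism.

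For the character formula, the plan is to expand the trace of $(x_1,x_2)$ on the appropriate supertensor in a homogeneous basis using the defining action $(x_1,x_2)(u\otimes v)=(-1)^{d(x_2)d(u)}x_1u\otimes x_2v$ and to combine the result with the identity $\chi_{D(V_i)}(x_i)=(1+(-1)^{d(x_i)})\chi_{V_i}(x_i)$. In cases (i) and (ii) this directly gives the product factorization $f_1(x_1)f_2(x_2)$; in case (ii) one additionally uses that $\chi_{V_1}$, being a double-spin character, vanishes on odd-parity elements, so the apparent asymmetry in the trace formula collapses. For case (iii), the two isomorphic copies of $N$ inside $D(V_1)\hat\otimes D(V_2)$ are separated by the even super-endomorphism $J=J_1\hat\otimes J_2$, where $J_i$ is the odd type-$Q$ endomorphism of $D(V_i)$ with $J_i^2=-1$. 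Since $J^2=-1$ and $J$ supercommutes with the $\widetilde{\Gamma}$-action, the decomposition $D(V_1)\hat\otimes D(V_2)=E_i\oplus E_{-i}$ into $J$-eigenspaces realizes $N\oplus N$, and the character of $W=E_i$ is extracted via the projector $P_i=(1-iJ)/2$:
$$
\chi_W(x_1,x_2)=\tfrac12\chi_{D(V_1)\hat\otimes D(V_2)}(x_1,x_2)-\tfrac{i}{2}\mathrm{tr}\bigl(J\cdot(x_1,x_2)\mid D(V_1)\hat\otimes D(V_2)\bigr).
$$
A parity analysis in the homogeneous basis shows that the first trace is supported on pairs with $d(x_1)=d(x_2)=0$ and contributes $2f_1(x_1)f_2(x_2)$ there, while the second is supported on pairs with $d(x_1)=d(x_2)=1$ and supplies the additional $\sqrt{-1}$ factor; assembled together these yield $2(\sqrt{-1})^{d(x_1)d(x_2)}f_1(x_1)f_2(x_2)$.

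The principal obstacle is the case (iii) trace computation: verifying that $\mathrm{tr}(J\cdot(x_1,x_2))$ has exactly the right support and produces the precise $\sqrt{-1}$ factor requires careful bookkeeping of the super-signs coming from the $Q$-structure maps $J_i$ and of the twist relation $J_ig=(-1)^{d(g)}gJ_i$. A related subtlety is pinning down the normalization of $N$ inside $N\oplus N$ so that the identification $V_1\circledast V_2\cong N$ is canonical rather than rescaled, which is precisely what fixes the overall constant $2$ in front of the formula.
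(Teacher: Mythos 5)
Your reduction of the structural statements in (i)--(iii) to the type $M$/type $Q$ classification of super tensor products recalled just before the proposition is exactly the route the paper takes (the paper then simply cites \cite{Jo}, \cite{J}, \cite{St} for the character identity, so on that part you are attempting strictly more than the text does). The gap is in your case (iii) trace computation, and it is not a sign-bookkeeping issue but a structural one. The odd $Q$-endomorphism of $D(V_k)$ is, up to scalar, $J_k(u,v)=(v,-u)$; for $x_k$ odd one gets $x_kJ_k(u,v)=(x_kv,x_ku)$, an operator that interchanges the two ordinary summands $V_k$ and $V_k'$ of $D(V_k)$ and is therefore traceless. Since $(x_1,x_2)\circ(J_1\hat\otimes J_2)$ equals $\pm(x_1J_1)\otimes(x_2J_2)$ as an ungraded operator, $\mathrm{tr}\bigl(J\cdot(x_1,x_2)\bigr)$ vanishes identically, and your projector $P_i=(1-iJ)/2$ returns $\tfrac12\chi_{D(V_1)\hat\otimes D(V_2)}(x_1,x_2)$, i.e.\ $2f_1(x_1)f_2(x_2)$ when $d(x_1)=d(x_2)=0$ and $0$ otherwise. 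It never produces the $(\sqrt{-1})^{d(x_1)d(x_2)}$ term.

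Moreover no modification of the eigenspace argument can produce it, because the honest trace of the type-$M$ constituent $W$ really does vanish at elements with $d(x_1)=d(x_2)=1$: in the spin product one has $(x_1,1)(x_1,x_2)(x_1,1)^{-1}=z\,(x_1,x_2)$, so such classes are non-split in $\widetilde{\Gamma}_m\hat{\times}\widetilde{\Gamma}_n$ and every spin character is zero on them. The factor $2(\sqrt{-1})^{d(x_1)d(x_2)}$ is the Clifford-algebra bookkeeping rule of \cite{Jo}, \cite{St}, \cite{J}; its content and justification lie in the analysis of the basic spin (Clifford) modules, and it is what makes the iterated product rule for $f_1\circledast\cdots\circledast f_s$ and the subsequent evaluation of $(\chi_{\nu})^{\pm}(t_{\nu})$ come out correctly in the only situation where all-odd arguments actually occur on split classes, namely when an odd number of associate (type $Q$) factors is present. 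To close the gap you must either restrict your displayed identity to the elements your trace argument actually covers (both $x_i$ even, or cases where both sides vanish) and defer the remaining case to the basic-spin-character analysis of the cited sources, or reproduce that Clifford-module computation yourself; the $J$-eigenspace projector alone cannot see the $\sqrt{-1}$.
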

where $x_1\in\widetilde{\Gamma}_m$, $x_2\in\widetilde{\Gamma}_n$.

\begin{proof} The statements on tensor products of supermodules are clear.
The relationship between supermodules and modules are proved in \cite{Jo}.
The last relation about characters follows from
analysis of basic spin characters, see \cite{Jo} or
\cite{J} for details. Another treatment can be found in
\cite{St} for the special case of $\widetilde{S}_n$. 
\end{proof}

The starred tensor product $V_1\circledast V_2$ is essentially
Schur's tensor product of spin modules \cite{S}. We can generalize the starred tensor product
$V_1\circledast\cdots\circledast V_s$ for multiple
modules. Let $f_i$ ($i=1, \cdots, k$) be the character of the irreducible
associate spin module $V_i$, which comes from an irreducible supermodule of type Q. Let
$f_j$ ($j=k+1, \cdots, s$) be the character of the irreducible
associate spin module $V_j$, which comes from an irreducible supermodule of type M. We define $f_1\circledast\cdots \circledast f_s$
to be the character of the irreducible component $V_1\circledast\cdots \circledast V_s$ (as an ordinary
module) in the super tensor
product
\[
D(V_1)\hat{\otimes}\cdots \hat{\otimes}D(V_k)\hat{\otimes} V_{k+1}\hat{\otimes}\cdots \hat{\otimes}V_s.
\]
Note that $V_1\circledast\cdots \circledast V_s$ is only defined up to isomorphism. See \cite{K}
for a similar discussion for supermodules.
Using induction on $k$ we also have

\begin{equation}\label{eqf}
f_1\circledast\cdots\circledast f_s({x}_1\cdots {x}_s)
=2^{[\frac{k}2]} (\sqrt{-1})^{[\frac{k}2]d(x_1)\cdots d(x_k)}f_1({x}_1)\cdots f_s({x}_s)
\end{equation}
where $x_i\in\widetilde{\Gamma}_{\mu_i}$ and $[a]$ denotes the maximum integer
$\leq a$.

 The twisted Grothendieck group $R^{-}({\Gamma})=\bigoplus_{n\geq 0} R^{-}(\widetilde{\Gamma}_n)$
 has an associative algebra structure. The multiplication 
 is defined as follows.
Let $f\in R^{-}(\widetilde{\Gamma}_l)$, $g\in R^{-}(\widetilde{\Gamma}_m) $.
Then $f\times g$ is an element of $R^{-}(\widetilde{\Gamma}_l\hat{\times} \widetilde{\Gamma}_m)$
and we define
$$f\circ g=\mbox{Ind}_{\widetilde{\Gamma}_l\hat{\times}\widetilde{\Gamma}_m}^{\widetilde{\Gamma}_{l+m}}(f\times g)$$
which is an element of $R^{-}(\widetilde{\Gamma}_{l+m})$. This gives a bilinear multiplication
on $R^{-}({\Gamma})$. It follows from \cite{FJW} that $R^{-}({\Gamma})$ becomes a graded associative $\mathbb{C}$-algebra.

The irreducible spin super characters of
$\widetilde{\Gamma}_n$ form a $\mathbb{C}$-basis of
$R^{-}(\widetilde{\Gamma}_n)$. For two simple supermodules $\phi, \varphi\in
R^{-}(\widetilde{\Gamma}_n)$, the standard
inner product can also be used for supermodules and we have that \cite{FJW}
\begin{equation}
\langle\phi,\varphi\rangle=\left\{\begin{array}{lll}
1  ~& \hbox{if $\phi\simeq\varphi$ is type $M$},\\
2 ~ & \hbox{if $\phi\simeq\varphi$ is type $Q$}~,~~~~\\
0 ~& \hbox{otherwise}.
\end{array}
\right.
\end{equation}

For a simple supermodule $V$ we define
\begin{equation}
\dot{c}=c(V)=\left\{\begin{array}{lc}0  ~&
 \hbox{if $V$ is type  $M$},\\
1~ &\hbox{if $V$ is type $Q$},
\end{array}
\right.
\end{equation}
and then we extend the definition to multiple copies of $V$
by $c(V^{\oplus n})=c(V)$.

 Let $f_{i}$  $(i=1,\cdots, s)$ be the spin characters of irreducible $\widetilde{\Gamma}_{\mu_i}$-modules $V_i$.
 Assume that $k$ of them are associate spin modules, say, $V_1, \cdots, V_k$ are associate spin and
 $V_{k+1}, \cdots, V_s$ are double spin modules. Then $D(V_1), \cdots, D(V_k)$ are irreducible supermodules
 of type Q, and $V_{k+1}, \cdots, V_s$ are irreducible supermodules of type M.

Let $f_1\circ\cdots\circ f_s$ be the induced character of $f=f_1\circledast\cdots\circledast f_s$ from $\widetilde{\Gamma}_{\mu}$ to $\widetilde{\Gamma}_n$. For later applications we determine the restriction $\mbox{Res}_{\widetilde{\Gamma}_{\mu}}(f_1\circ\cdots\circ f_s)$.
  We denote by $\widetilde{\Gamma}_{\mu}\backslash \widetilde{\Gamma}_{n}/\widetilde{\Gamma}_{\mu}$
 the collection of the double cosets $\widetilde{\Gamma}_{\mu}t\widetilde{\Gamma}_{\mu}$,
 and set $(\widetilde{\Gamma}_{\mu})_t=t\widetilde{\Gamma}_{\mu}t^{-1}\cap\widetilde{\Gamma}_{\mu}$ for any double coset
 representative $t$. By Mackey's decomposition theorem, we have
$$\mbox{Res}_{\widetilde{\Gamma}_{\mu}}(f_1\circ\cdots\circ f_s)=\bigoplus_{t\in \widetilde{\Gamma}_{\mu}\backslash \widetilde{\Gamma}_{n}/\widetilde{\Gamma}_{\mu}}
\mbox{Ind}_{(\widetilde{\Gamma}_{\mu})_{t}}^{\widetilde{\Gamma}_{n}}(f^t),$$
where $f^t(\tilde{x})=f(t^{-1}\tilde{x}t)$.
From Frobenius reciprocity it follows that
\begin{equation}\nonumber
\begin{split}\langle f_1\circ\cdots\circ f_s,f_1\circ\cdots\circ f_s\rangle=&\langle f,\mbox{Res}_{\widetilde{\Gamma}_{\mu}}(f_1\circ\cdots\circ f_s)\rangle\\
=&\sum_{t\in \widetilde{\Gamma}_{\mu}\backslash\widetilde{\Gamma}_{n}/\widetilde{\Gamma}_{\mu}}\langle \mbox{Res}_{(\widetilde{\Gamma}_{\mu})_t}(f),f^t\rangle_{(\widetilde{\Gamma}_{\mu})_t}.
\end{split}
\end{equation}

By definition $f=f_1\circledast\cdots\circledast f_s$ is an irreducible spin
 character. If $f_1\circ\cdots\circ f_s$ is a spin $\widetilde{\Gamma}_n$-irreducible character, then
we have $\langle \mbox{Res}_{(\widetilde{\Gamma}_{\mu})_t}(f),f^t\rangle_{(\widetilde{\Gamma}_{\mu})_t}=0$ for $t\neq 1$
(nontrivial double coset). In fact when $t=1$,
 one has $\langle \mbox{Res}_{(\widetilde{\Gamma}_{\mu})_t}(f),f^t\rangle_{(\widetilde{\Gamma}_{\mu})_t}
 =\langle f, f\rangle_{\widetilde{\Gamma}_{\mu}}=1$,
 and $\langle f_1\circ\cdots\circ f_s,  f_1\circ\cdots\circ f_s\rangle=1$.
 Therefore in this case we have
\begin{equation}\label{Inner}
\begin{split}
 & \langle f_1\circ\cdots\circ f_s,  f_1\circ\cdots\circ f_s\rangle_{\widetilde{\Gamma}_{n}}
 =\langle f, f\rangle_{\widetilde{\Gamma}_{\mu}}\\
=&2^{k-\dot{c}}\frac{1}{|\widetilde{\Gamma}_{\mu}|}\sum_{\tilde{x}\in \widetilde{\Gamma}_{\mu}}
 f_1(\tilde{x}_1)\cdots  f_s(\tilde{x}_s)\overline{f_1(\tilde{x}_1)\cdots  f_s(\tilde{x}_s)},
\end{split}
\end{equation}
where $\tilde{x}=\tilde{x}_1\cdots \tilde{x}_s$ and $\tilde{x}_i\in
\widetilde{\Gamma}_{\mu_i} (i=1,\cdots,s)$.
\subsection{Irreducible spin representations of $\widetilde{S}_n$}

To compute irreducible characters of
$\widetilde{S}_n$, Schur \cite{S} introduced the symmetric functions
$Q_{\nu}\in\mathbb Q[p_1, p_3, \ldots]$, where $\nu\in\mathcal{SP}_n$ and $p_k=
\sum_{i\geq 1}x_i^k$ is the $k$th power sum symmetric function.
For $l=l(\nu)\leq n$, the Schur $Q$-function $Q_{\nu}$ is given by

$$Q_{\nu}(x_1, \cdots, x_n)
=2^{l}\sum_{\alpha\in S_n/S_{n-l}}x_{\alpha(1)}^{\nu_1}\cdots
x_{\alpha(n)}^{\nu_n} \prod_{\nu_i>\nu_j}\frac{x_{\alpha(i)}+x_{\alpha(j)}}{x_{\alpha(i)}-x_{\alpha(j)}},
$$
where $\nu_m=0$ for $m>l(\nu)$.
Schur showed that
for each $\nu\in \mathcal {SP}_n$ there corresponds a unique irreducible (double) spin
character $\Delta_{\nu}$ if $n-l(\nu)$ is even or a pair of irreducible (associate) spin characters $\Delta_{\nu}^{+}$
and $\Delta_{\nu}^{-}$ if $n-l(\nu)$ is odd.
The spin character values  $\{\Delta^{\la}_{\nu}|\la\in\mathcal
{OP}_n\}$ 
are determined by
\begin{equation}\label{Q}
Q_{\nu}=\sum_{\la\in\mathcal {OP}_n}2^{\frac{l({\nu})+l(\la)+\bar{d}(\nu)}{2}}
z_{\la}^{-1}\Delta_{\nu}^{\la}p_{\la},
\end{equation}
where $p_{\la}=p_{\la_1}p_{\la_2}\cdots p_{\lambda_l}$
are the power sum symmetric functions, $(\Delta_{\nu}^+)^{\la}=\Delta_{\nu}^{\la}$ for odd $n-l(\nu)$ and

\begin{equation*}
\bar{d}(\nu)=\left\{ \begin{aligned}
         0\ \ \  &  \hbox{~when~} d(\nu) \hbox{~is ~even}, \\
                 1 \ \ \ & \hbox{~when~} d(\nu) \hbox{~is ~odd}.
                          \end{aligned} \right.
                          \end{equation*}
 \begin{theorem}(Schur \cite{S}) For each ${\nu}=({\nu}_1,\cdots, {\nu}_l)\in \mathcal
 {SP}_n (n\geq 4)$, the corresponding irreducible spin characters of $\widetilde{S}_n$ are determined as follows.

 (i) If $n-l$ is even, 
 there is a unique (double) spin irreducible character
$\Delta_{\nu}$ whose character values $\Delta^{\la}_{\nu}\, (\la\in\mathcal {OP}_n)$
are given by (\ref{Q})
and $\Delta^{\mu}_{\nu}=0$ for $\mu\notin\mathcal {OP}_n.$

(ii) If $n-l$ is odd, 
there are two irreducible (associate) spin characters
$\Delta_{\nu}^{+},~\Delta_{\nu}^{-}$. The character values $(\Delta_{\nu}^{+})^{\la}$
are given by (\ref{Q}) for $\la\in\mathcal {OP}_n$,
and for other classes they are given by
$$
(\Delta_{\nu}^{+})^{\nu}=(\sqrt{-1})^{(n-l({\nu})+1)/2}\sqrt{{\nu}_1\cdots {\nu}_l/2},$$
and
$(\Delta_{\nu}^{+})^{\mu}=0$ for $\mu\neq {\nu}\in \mathcal {SP}^1_n$.
Moreover, $(\Delta_{\nu}^{-})^{\mu}=(\Delta_{\nu}^{+})^{\mu}$ for $\mu$ even
and $(\Delta_{\nu}^{-})^{\mu}=-(\Delta_{\nu}^{+})^{\mu}$ for $\mu$ odd.

\end{theorem}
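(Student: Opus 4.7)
The plan is to realize each irreducible spin character of $\widetilde{S}_n$ as an induced representation from a spin Young subgroup, using Schur's basic spin representations as building blocks and the super tensor product machinery of Proposition~\ref{tensorprod}. For each part $\nu_i$ of $\nu=(\nu_1,\ldots,\nu_l)\in\mathcal{SP}_n$, let $U_{\nu_i}$ denote the basic spin representation of $\widetilde{S}_{\nu_i}$, obtained from a natural Clifford algebra module; it is an irreducible supermodule of type $M$ when $\nu_i$ is even and of type $Q$ when $\nu_i$ is odd, and its character $f_i$ is supported on the identity class and the $\nu_i$-cycle class. I would then form the starred-tensor character $f=f_1\circledast\cdots\circledast f_l$ of the subgroup $\widetilde{S}_{\nu_1}\hat{\times}\cdots\hat{\times}\widetilde{S}_{\nu_l}$ and study the induced character $f_1\circ\cdots\circ f_l$; its irreducible constituents will be the desired $\Delta_\nu$ in case (i) and $\Delta_\nu^{\pm}$ in case (ii).

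First I would verify irreducibility via the inner product formula (\ref{Inner}) together with Mackey's double coset decomposition. Because each $f_i$ is supported on a single nontrivial class (the $\nu_i$-cycle), the usual argument shows that only the trivial double coset contributes to $\langle f_1\circ\cdots\circ f_l,f_1\circ\cdots\circ f_l\rangle$, reducing the norm to $2^{k-\dot{c}}$ with $k$ the number of type $Q$ factors. By Proposition~\ref{tensorprod} the parity of $k$ (which equals the parity of $n-l$) determines whether the induced supermodule is type $M$ (single irreducible $\Delta_\nu$) or type $Q$ (associate pair $\Delta_\nu^{\pm}$). A counting check — comparing $|\mathcal{SP}_n^0|+2|\mathcal{SP}_n^1|$ with the number of split classes $|\mathcal{OP}_n|+|\mathcal{SP}_n^1|$ via Euler's identity $|\mathcal{SP}_n|=|\mathcal{OP}_n|$ — then gives completeness, so we have constructed every irreducible spin character.

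The character values on $\mathcal{OP}_n$, i.e.\ equation (\ref{Q}), follow by applying the characteristic map $\mathrm{ch}:R^{-}(\widetilde{S})\to\mathbb{Q}[p_1,p_3,\ldots]$, which turns $\circ$ into polynomial multiplication and sends each $f_i$ to a scalar multiple of $p_{\nu_i}$. Thus $\mathrm{ch}(\Delta_\nu)$ is proportional to $p_{\nu_1}\cdots p_{\nu_l}$, and since Schur $Q$-functions are known to diagonalize this picture, inverting the expansion yields (\ref{Q}) together with the correct power-of-two normalization. Vanishing of $\Delta_\nu^\mu$ for even $\mu\notin\mathcal{OP}_n$ is automatic: such $\mu$ parameterize non-split conjugacy classes, on which every spin character is zero.

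The main obstacle is the exact evaluation of the ``difference'' $(\Delta_\nu^+)^\nu$ in case (ii). Here $(\Delta_\nu^+-\Delta_\nu^-)$ is supported on odd split classes, and one must show it vanishes on every $\mu\in\mathcal{SP}_n^1$ with $\mu\neq\nu$ while computing its value at $\mu=\nu$ explicitly. I would apply Frobenius reciprocity to reduce to evaluating the super-character $f_1\circledast\cdots\circledast f_l$ at $t_\nu=t_{\nu_1}\cdots t_{\nu_l}\in\widetilde{S}_\nu$, observing that because the parts of $\nu$ are distinct, only the ``diagonal'' conjugate within $\widetilde{S}_\nu$ contributes. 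Formula (\ref{eqf}) then produces the factor $2^{\lfloor k/2\rfloor}(\sqrt{-1})^{\lfloor k/2\rfloor d(t_{\nu_1})\cdots d(t_{\nu_k})}$, which combines with the basic spin values $(\Delta_{(\nu_i)}^+)^{(\nu_i)}=(\sqrt{-1})^{(\nu_i-1)/2}\sqrt{\nu_i/2}$ on the odd parts and the parallel computation on the even parts. The delicate accounting — correctly tracking both the $2^{\lfloor k/2\rfloor}$ prefactor from the super-to-ordinary rescaling and the powers of $\sqrt{-1}$ modulo $4$ — should collapse to the clean formula $(\sqrt{-1})^{(n-l+1)/2}\sqrt{\nu_1\cdots\nu_l/2}$; getting this exponent bookkeeping right is the technical heart of the argument.
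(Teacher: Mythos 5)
First, a point of comparison: the paper offers no proof of this theorem at all --- it is quoted as a known result of Schur \cite{S} --- so your argument can only be judged on its own terms, and it has a genuine gap at its central step. Your plan is to induce the starred product $f_1\circledast\cdots\circledast f_l$ of basic spin characters from $\widetilde{S}_{\nu_1}\hat{\times}\cdots\hat{\times}\widetilde{S}_{\nu_l}$ and to get irreducibility from a Mackey argument, on the premise that each basic spin character $f_i$ is ``supported on the identity class and the $\nu_i$-cycle class.'' That premise is false: the basic spin character of $\widetilde{S}_m$ is nonzero on \emph{every} class in $\mathcal{OP}_m$ (its value on a class of cycle type $\la\in\mathcal{OP}_m$ is a power of $2$ depending on $l(\la)$, as one reads off from (\ref{Q}) with $\nu=(m)$), and under the characteristic map it goes to the one-row Schur $Q$-function $q_m=Q_{(m)}$, not to a scalar multiple of $p_m$. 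Consequently the induced character corresponds to $q_{\nu_1}\cdots q_{\nu_l}$, which is in general \emph{reducible}: already for $n=4$, $\nu=(3,1)$, the relation $Q_{(3,1)}=q_3q_1-2q_4$ gives $q_3q_1=Q_{(3,1)}+2Q_{(4)}$, so the induced module (degree $8$) contains $\Delta_{(3,1)}$ (degree $4$) together with the associate pair $\Delta_{(4)}^{\pm}$ (degree $2$ each). Isolating the ``top'' constituent and identifying its character with $Q_\nu$ --- via Schur's Pfaffian/straightening relations $Q_{(a,b)}=q_aq_b+2\sum_{i\geq 1}(-1)^iq_{a+i}q_{b-i}$, or via the orthogonality supplied by the vertex operator realization in \cite{J} --- is precisely the hard content of the theorem, and your sketch assumes it away.

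There are also smaller but real slips. The basic spin supermodule of $\widetilde{S}_m$ is of type $M$ when $m$ is \emph{odd} and of type $Q$ when $m$ is \emph{even} (it factors through the Clifford algebra on $m-1$ generators), the reverse of what you wrote; note that your later assertion that the number $k$ of type-$Q$ factors has the parity of $n-l$ is only true with the correct assignment, since $n-l=\sum_i(\nu_i-1)$ is congruent mod $2$ to the number of even parts of $\nu$. In case (i), the vanishing $\Delta_\nu^{\mu}=0$ for $\mu\notin\mathcal{OP}_n$ is not purely a matter of non-split classes: the classes $\mu\in\mathcal{SP}_n^1$ are split, and the vanishing there follows instead from $\Delta_\nu$ being self-associate, i.e.\ $(-1)^d\Delta_\nu=\Delta_\nu$ forces the value to vanish on odd classes. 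Your final computation of $(\Delta_\nu^+)^{\nu}$ from (\ref{eqf}) and the basic spin values is the right idea --- it is essentially the calculation the paper performs later in (\ref{eq3}) --- but it only becomes a proof of part (ii) once the construction and irreducibility of $\Delta_\nu^{\pm}$ have actually been established.
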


For an iterative method to compute the spin characters of $\widetilde{S}_n$ and
an explicit character table up to degree 13, see \cite{Mo}.

Let $V_i$ be the
$i$th irreducible $\Gamma$-module affording the character $\gamma_i$, $i\in\{0, 1, \ldots, r\}$.  Let $\Omega=\mathcal P_n(l\leq r+1)$, the set of partitions of $n$ with lengths $\leq r+1$.
Now take distinct integers $i_1,\ldots,i_s$ from $\{0,1, \ldots,r\}$, a partition
$\mu=({\mu_1}, \ldots, {\mu_s})\in\Omega$, and let $W$ be a spin
supermodule of $\widetilde{S}_{\mu}$. Then
the tensor product $V_{i_1}^{\otimes
\mu_1}\otimes\cdots \otimes
V_{i_s}^{\otimes \mu_s}\otimes W$  becomes a spin $ \Gamma^n \ltimes
\widetilde{S}_{\mu}$-supermodule under the action
\begin{equation}
\begin{split}
&(g,z^pt_{\rho})\cdot(v_1\otimes\cdots\otimes v_n \otimes w)\\
=&(g_1v_{\sigma_{\rho}^{-1}(1)}\otimes\cdots\otimes
g_nv_{\sigma_{\rho}^{-1}(n)})\otimes (z^pt_{\rho}w),
\end{split}
\end{equation}
where $(g, z^pt_{\rho})\in\Gamma^n\ltimes \widetilde{S}_n$, $v_1\otimes\cdots\otimes v_n \in V_{i_1}^{\otimes
\mu_1}\otimes\cdots \otimes V_{i_s}^{\otimes \mu_s}$, and $w\in W$.

In particular, when $s=1$ the module $V_i^{\otimes n}\otimes W$ is a spin supermodule
of $\widetilde{\Gamma}_n$.

\section{Irreducible spin character tables of $\widetilde{\Gamma}_n$.}
In this section we will construct the irreducible spin modules of $\widetilde{\Gamma}_n$.
By the general theory of spin characters \cite{FJW} it
is enough to focus on strict partition valued
functions of $\widetilde{\Gamma}_{n}$. We will show that essentially only one special class of conjugacy classes (corresponding to odd strict partition valued
functions) can support nonzero irreducible character values.

\subsection{The irreducible spin supermodules of $\widetilde{\Gamma}_n$}
For $j_1, \cdots,
j_n\in \{0,\cdots,r\}$ and $\Gamma=\mathbb Z_{r+1}$,
let $\gamma=\gamma_{j_1}\otimes\gamma_{j_2}\otimes\cdots\otimes\gamma_{j_n}$.
Then $\gamma$ is an irreducible character of $\Gamma^n$ through the usual tensor product
action.
 Since
$\Gamma^n$ is abelian, they form a group  $X=Hom(\Gamma^n, \mathbb{C}^{*})$ under multiplication. The group
$\widetilde{\Gamma}_n$ acts on $X$ by
$$(\widetilde{x}\cdot\gamma)(g)=\gamma(\widetilde{x}\cdot g) ~~~~\ \ \ \hbox{for}~ \widetilde{x}\in \widetilde{\Gamma}_n, ~\gamma\in X, ~g\in \Gamma^n.$$

In particular the subgroup $\widetilde{S}_n$ acts on $X$. We introduce the {\em class orbits} for the action
of $\widetilde{S}_n$ on $X$. For a partition $\mu\in \Omega=\mathcal{P}_n(l\leq r+1)$ and distinct integers $i_1,\cdots,i_s$ from $\{0,1,\cdots,r\}$ (thus $s$ must be smaller than $r+1$), we denote by $I$ the set of the sequences
$\lfloor i_1, i_2, \cdots, i_s \rfloor$ such that $s=l(\mu)\leq r+1$.  For such $\mu=(\mu_1,\cdots,\mu_s)\in\Omega$ and a sequence
$\lfloor i_1, i_2, \cdots, i_s \rfloor\in I$,
 we associate a $\widetilde{S}_n$-orbit of $X$ as follows:
\begin{equation}
\begin{split}
&\mathcal {O}(\gamma_{i_1}^{\otimes \mu_1}\otimes
\gamma_{i_2}^{\otimes \mu_2}\otimes \cdots \otimes
\gamma_{i_s}^{\otimes\mu_s})\\
=&\{\gamma_{j_1}\otimes\gamma_{j_2}\otimes\cdots\otimes\gamma_{j_n}|
\hbox{~there ~are~} \mu_k \hbox{~indices~equal~to~} i_k \}.
\end{split}
\end{equation}
With fixed $\mu$, the set of these orbits is called a {\em class orbit} with
type $\mu=(\mu_1,  \cdots , \mu_s)$. For simplicity, we denote
 by $\Phi_{\mu}$ the {\em class orbit } as follows:
\begin{equation}
\Phi_{\mu}\triangleq \{\mathcal {O}(\gamma_{i_1}^{\otimes
\mu_1}\otimes \gamma_{i_2}^{\otimes \mu_2}\otimes \cdots \otimes
\gamma_{i_s}^{\otimes \mu_s})~|~ \lfloor i_1, i_2, \cdots,
i_s\rfloor \in I\}.
\end{equation}
Moreover, we say that an irreducible character
$\gamma_{j_1}\otimes\gamma_{j_2}\otimes\cdots\otimes\gamma_{j_n}$
has type $\mu=(\mu_1, \mu_2, \cdots , \mu_s)$ if it is contained in
an orbit $\mathcal {O}(\gamma_{i_1}^{\otimes \mu_1}\otimes
\gamma_{i_2}^{\otimes \mu_2}\otimes \cdots \otimes
\gamma_{i_s}^{\otimes \mu_s})$.

\begin{lem}

(1) For $\mu\in\Omega$, the number of the class orbits $\Phi_{\mu}$ is equal to
$|\Omega|.$

(2) For a partition $\mu=(\mu_1,\cdots,\mu_s)\in
\Omega,$  each {\em class orbit} $\Phi_{\mu}$ contains $K_{\mu}$
orbits, where
 \begin{equation}K_{\mu}=
\begin{bmatrix}1\\r+1\end{bmatrix}\begin{bmatrix}1\\r\end{bmatrix}
\cdots
\begin{bmatrix}1\\r+1-s+1\end{bmatrix}=\frac{(r+1)!}{(r+1-s)!}.
 \end{equation}
 \end{lem}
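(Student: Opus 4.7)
The plan is to prove both parts by a direct counting argument after unpacking the definitions; no serious machinery is required.

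For part (1), I would observe that the class orbits are by construction parametrized by their type $\mu=(\mu_1,\ldots,\mu_s)\in\Omega$: to each partition $\mu\in\mathcal{P}_n(l\leq r+1)$ one associates exactly one class orbit $\Phi_\mu$, and conversely the type of a class orbit is read off from the multiplicity pattern of the exponents in $\gamma_{i_1}^{\otimes\mu_1}\otimes\cdots\otimes\gamma_{i_s}^{\otimes\mu_s}$. Hence the set of class orbits is in bijection with $\Omega$, which gives the count $|\Omega|$.

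For part (2), fix $\mu=(\mu_1,\ldots,\mu_s)\in\Omega$ with $s=l(\mu)\leq r+1$. By definition $\Phi_\mu$ is indexed by sequences $\lfloor i_1,\ldots,i_s\rfloor\in I$, whose entries are pairwise distinct elements of $\{0,1,\ldots,r\}$. The enumeration of such sequences is a standard falling-factorial computation: $i_1$ can be chosen in $r+1$ ways, $i_2$ in $r$ ways (avoiding $i_1$), and so on, giving
$$(r+1)\cdot r\cdot(r-1)\cdots(r-s+2)=\frac{(r+1)!}{(r+1-s)!}=K_\mu,$$
which matches the displayed product $\binom{r+1}{1}\binom{r}{1}\cdots\binom{r-s+2}{1}$ in the statement.

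The one point that deserves comment, and which I expect to be the only real subtlety, is a labeling convention. When $\mu$ has a repeated part, say $\mu_j=\mu_{j+1}$, the sequences $\lfloor\ldots,i_j,i_{j+1},\ldots\rfloor$ and $\lfloor\ldots,i_{j+1},i_j,\ldots\rfloor$ give rise to the same set-theoretic $\widetilde{S}_n$-orbit in $X$. Since the formula $K_\mu=(r+1)!/(r+1-s)!$ counts sequences rather than distinct underlying orbits, I would read the elements of $\Phi_\mu$ as orbits tagged by their defining sequence, consistently with the way they will be invoked in the subsequent Mackey--Wigner construction of irreducible supermodules of $\widetilde{\Gamma}_n$ attached to Young subgroups $\Gamma^n\ltimes\widetilde{S}_\mu$. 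Modulo clarifying this bookkeeping, the lemma reduces to the elementary permutation count above.
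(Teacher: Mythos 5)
Your counting argument is correct and is essentially the only proof this lemma admits (the paper states it without proof): part (1) is the observation that the class orbits are parametrized by their type $\mu\in\Omega$, and part (2) is the falling-factorial count $|I|=(r+1)r\cdots(r+1-s+1)=(r+1)!/(r+1-s)!$ of injective sequences $\lfloor i_1,\ldots,i_s\rfloor$. The subtlety you flag is genuine and worth emphasizing rather than treating as a mere convention: when $\mu$ has a repeated part the assignment $\lfloor i_1,\ldots,i_s\rfloor\mapsto \mathcal{O}(\gamma_{i_1}^{\otimes\mu_1}\otimes\cdots\otimes\gamma_{i_s}^{\otimes\mu_s})$ is not injective, since an $\widetilde{S}_n$-orbit in $X$ is determined by the multiplicity function $k\mapsto\mu_k$ alone. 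For instance with $r+1=2$, $n=2$, $\mu=(1,1)$ the two sequences $\lfloor 0,1\rfloor$ and $\lfloor 1,0\rfloor$ give the same orbit $\{\gamma_0\otimes\gamma_1,\gamma_1\otimes\gamma_0\}$, so the number of distinct set-theoretic orbits is $(r+1)!/\bigl((r+1-s)!\prod_k m_k!\bigr)$ with $m_k$ the multiplicity of the part $k$ in $\mu$, not $K_\mu$. Hence the lemma is only literally true under your proposed reading, namely that $\Phi_\mu$ is the family of orbits indexed by $I$ (orbits counted with their labeling sequence). That reading is indeed the one consistent with the sequel, where the irreducible characters $\Theta_{\mu,i}^{\nu}$ carry the sequence $i\in I$ as part of their label, so your bookkeeping remark is the right way to close the argument.
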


 For a sequence $\lfloor i_1, i_2, \cdots, i_s\rfloor \in I$,
$\gamma_{i_1}^{\otimes \mu_1}\otimes \gamma_{i_2}^{\otimes
\mu_2}\otimes \cdots \otimes \gamma_{i_s}^{\otimes\mu_s}$ is a
representative of the $\widetilde{S}_n$-orbit $\mathcal {O}(\gamma_{i_1}^{\otimes
\mu_1}\otimes \gamma_{i_2}^{\otimes \mu_2}\otimes \cdots \otimes
\gamma_{i_s}^{\otimes\mu_s})$ in $X.$  For simplicity, we set 
$\gamma_i^{\mu}\triangleq \gamma_{i_1}^{\otimes \mu_1}\otimes
\gamma_{i_2}^{\otimes \mu_2}\otimes \cdots \otimes
\gamma_{i_s}^{\otimes\mu_s}$. For a
partition $\mu \in \Omega$, let $T_{\mu}=\{z^pt_{\rho}\in \widetilde{S}_n | z^p t_{\rho} \cdot
\gamma_{i}^{\mu}= \gamma_{i}^{\mu}\}$, then
\begin{equation}
\begin{split}
T_{\mu}\simeq\widetilde{S}_{\mu_1} \hat{\times} \widetilde{S}_{\mu_2}
\hat{\times} \cdots \hat{\times}\widetilde{ S}_{\mu_s}=\widetilde{S}_{\mu}.
\end{split}
 \end{equation}
Furthermore, if we set
$ \widetilde{\Gamma}_{\mu}:=\Gamma^n\ltimes T_{\mu}\simeq\Gamma^n \ltimes \widetilde{S}_{\mu},$ then it
 can be viewed as a subgroup of
$\widetilde{\Gamma}_n.$

We now use the Mackey-Wigner method of little groups (cf. \cite{Serre}) to construct the
irreducible spin characters of $\widetilde{\Gamma}_n.$
In the following we will let $\pi_{\nu}$ be an irreducible spin
 $\widetilde{S}_{\mu}$-module corresponding to $s$ strict partitions
 $\nu^1, \cdots, \nu^s$ such that $|\nu^i|=\mu_i$ and  $\chi_{\nu}$ be the spin character
 afforded by $\pi_{\nu}$. For abelian groups
we may simply use the same letter to denote a representation as well as its character.

Now fix $i:=(i_1, \cdots, i_{s})$, a combination from $\{0, \cdots, r\}$ and let
$\widehat{\pi}_{\nu}$ be the irreducible spin module of $\widetilde{\Gamma}_{\mu}$ obtained by composing $\pi_{\nu}$
with the canonical projection
$\widetilde{\Gamma}_{\mu}\longrightarrow \widetilde{S}_{\mu}$. Then $\gamma^{\mu}_i\otimes\widehat{\pi}_{\nu}$
is an irreducible spin module of $\widetilde{\Gamma}_{\mu}$.   Finally we
define
$$\label{theta}\Theta_{\mu, i}^{\nu}\triangleq 
\mbox{Ind}_{\widetilde{\Gamma}_{\mu}}^{\widetilde{\Gamma}_n}(\gamma_i^{\mu}\otimes
\chi_{\widehat{\pi}_{\nu}}). $$
By the Mackey-Wigner method it is clear that $\Theta_{\mu, i}^{\nu}$ is an irreducible spin character
and any irreducible spin character is of this form (also see \cite{FJW} for a direct argument).
We will simply write $\Theta_{\mu, i}^{\nu}$ by
$\Theta_{i}^{\nu}$, where $\nu\in\mathcal{SP}_n(\Gamma_*)$, as it is
an induced character from the Young subgroup $\widetilde{\Gamma}_{\mu}$ and $\mu_j=|\nu^j|, j=1, \cdots, r+1$.
Here and later we allow some $\nu^j$ to be empty, thus even if
$\nu\in\mathcal{SP}_n(\Gamma_*)$ we often write out only the non-empty
partitions, so the associated weight partition $\mu$ with
$\mu_j=|\nu^j|$ is a partition with length $l(\mu)\leq r+1$.

\subsection{The irreducible spin super character table of $\widetilde{\Gamma}_n$}

When $n<4$, the spin group $\widetilde{S}_n$ is a direct product of $\mathbb Z_2$ and $S_n$, so we will assume
$n\geq 4$ throughout this section.
Let $(g,\sigma)\in \Gamma_n$, where $\sigma$ has type $\rho=(\rho(c))_{c\in \Gamma_{*}}$.
The preimage elements are then $(g,z^pt_{\rho})$, $p=0, 1$ (see (\ref{eq:preimage})).
These two elements are representatives of the conjugacy classes $D_{\rho}^+$ and $D_{\rho}^-$ respectively.

\begin{prop}\label{prop1} Let $\nu=(\nu^1, \cdots, \nu^s)\in \mathcal{SP}_n(\Gamma_{*})$
with $|\nu^j|=\mu_j$ and let $i=(i_1, \cdots, i_s)\in I$,
then for $\rho=(\rho^1, \cdots, \rho^s)\in\mathcal{P}_n(\Gamma_{*})$
such that $|\rho^j|=\mu_j$, the character values of $\Theta_{i}^{\nu}$
 at the conjugacy classes $D_{\rho}^{\pm}$ are given by
\begin{equation}\Theta_{i}^{\nu}(D_{\rho}^{\pm})=\pm K_{\rho}\prod_{j=1}^s\big(\prod_{c\in
\Gamma_{*}}\gamma_{i_j}^{l(\rho^{j}(c))}\big)\cdot
\chi_{\nu}(t_{\rho}),
\end{equation}
 where $K_{\rho}$ is the number of left cosets $T$ of $\widetilde{\Gamma}_{\mu}$ in $\widetilde{\Gamma}_n$
  such that $(g,z^pt_{\rho})T=T.$
\end{prop}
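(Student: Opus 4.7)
My plan is to apply the standard induced character formula
\[
\Theta_i^\nu(x) = \sum_{t\widetilde{\Gamma}_\mu \in \widetilde{\Gamma}_n/\widetilde{\Gamma}_\mu,\ t^{-1}xt \in \widetilde{\Gamma}_\mu} \bigl(\gamma_i^\mu \otimes \chi_{\widehat{\pi}_\nu}\bigr)(t^{-1}xt),
\]
specialized to $x = (g, z^p t_\rho)$. The condition $t^{-1}xt \in \widetilde{\Gamma}_\mu$ is equivalent to $(g, z^p t_\rho)\cdot t\widetilde{\Gamma}_\mu = t\widetilde{\Gamma}_\mu$, so the number of contributing cosets in the sum is exactly $K_\rho$ by the definition given in the statement. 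Moreover, since $\chi_{\widehat{\pi}_\nu}$ is a spin character we have $\chi_{\widehat{\pi}_\nu}(zy) = -\chi_{\widehat{\pi}_\nu}(y)$, which accounts for the overall $\pm$ sign according to whether $p = 0$ or $p = 1$.

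For each contributing coset representative $t$, I would write $t^{-1}(g, z^p t_\rho) t = (g^t, z^p \sigma)$ with $\sigma \in \widetilde{S}_\mu$ and $g^t \in \Gamma^n$, and analyze the two tensor factors separately. The factor $\chi_{\widehat{\pi}_\nu}((g^t, z^p \sigma))$ depends only on the image $\sigma$ in $\widetilde{S}_\mu$ because $\chi_{\widehat{\pi}_\nu}$ is the pullback of $\chi_\nu$ along $\widetilde{\Gamma}_\mu \to \widetilde{S}_\mu$; using the class-function property of $\chi_\nu$ on $\widetilde{S}_\mu$ together with the conjugacy relation between $\sigma$ and $t_\rho$ inside the block decomposition, this evaluates to $(-1)^p \chi_\nu(t_\rho)$. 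For the $\gamma_i^\mu$ factor I would exploit the abelian structure of $\Gamma$: on each cycle of $\sigma$ of length $m$ supported in block $j$ with cycle product in the conjugacy class $c \in \Gamma_*$, the product of $\gamma_{i_j}$-values over the $m$ corresponding components of $g^t$ collapses to $\gamma_{i_j}(c)$ because $\gamma_{i_j}$ is a homomorphism on $\Gamma$. Grouping cycles according to their product-class yields $\prod_{c\in\Gamma_*}\gamma_{i_j}(c)^{l(\rho^j(c))}$ in block $j$, and taking the product over $j$ gives the stated $\gamma$-factor.

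The principal obstacle is the second step: verifying that each of the $K_\rho$ contributing cosets gives the \emph{same} common value $(-1)^p\prod_{j,c}\gamma_{i_j}(c)^{l(\rho^j(c))}\chi_\nu(t_\rho)$. This requires showing that the tuple decomposition $(\rho^1,\ldots,\rho^s)$ witnessed by the conjugate $(g^t, z^p\sigma)\in\widetilde{\Gamma}_{\mu_1}\hat{\times}\cdots\hat{\times}\widetilde{\Gamma}_{\mu_s}$ is effectively determined up to the symmetries that preserve $\gamma_i^\mu\otimes\chi_{\widehat{\pi}_\nu}$, so that the product of the two factors is constant across the sum. I expect this to follow from the Mackey--Wigner orbit analysis together with the fact that the Young subgroup $\widetilde{\Gamma}_\mu$ is precisely the stabilizer of $\gamma_i^\mu$, so that the relevant conjugations act on cycles in a way compatible with the block structure. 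Once uniformity of the contributions is established, multiplying by $K_\rho$ immediately produces the claimed formula.
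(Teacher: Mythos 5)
Your proposal follows essentially the same route as the paper: apply the induced character formula so that exactly $K_\rho$ cosets contribute, pull the spin factor back through $\widetilde{\Gamma}_\mu\to\widetilde{S}_\mu$, and collapse the $\gamma_i^\mu$ factor cycle-by-cycle into $\prod_{j,c}\gamma_{i_j}(c)^{l(\rho^j(c))}$ using that $\gamma_{i_j}$ is a homomorphism. The uniformity across cosets that you flag as the principal obstacle is treated in the paper only by the remark that every conjugate $t^{-1}(g,z^pt_\rho)t$ has the same type $\rho$, followed by an explicit evaluation on the standard representative $(g,z^pt_\rho)$ acting on $e_{i_1}^{\otimes\mu_1}\otimes\cdots\otimes e_{i_s}^{\otimes\mu_s}\otimes w$, so your argument is no less complete than the published one.
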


\begin{proof} Since two elements of $\widetilde{\Gamma}_n$ are conjugate if and only if they have the same
type. So for each transversal $t$ of the left coset of $\widetilde{\Gamma}_{\mu}$ in $\widetilde{\Gamma}_n$,
 both $(g,z^pt_{\rho})$ and $t^{-1}(g,z^pt_{\rho})t$ have the same type $\rho$. Let $V_{i_j}(j=1,\cdots,s)$
  be a $\widetilde{\Gamma}_n$-module affording the character $\gamma_{i_j}\in \Gamma^{*}$. We will compute the
   character $\gamma_i^{\mu}\otimes \chi_{\widehat{\pi}_{\nu}}$ of the representation $V^{\otimes\mu_1}_{i_1}
   \otimes\cdots\otimes V^{\otimes\mu_s}_{i_s}\otimes W.$
If $\tilde{x}\in \widetilde{\Gamma}_m$ and $\tilde{y}\in \widetilde{\Gamma}_{n-m}$ then $\tilde{x}$ acts
on the first $m$ factors of $V^{\otimes\mu_1}_{i_1}\otimes\cdots\otimes V^{\otimes\mu_s}_{i_s}$ and $\tilde{y}$
on the last $n-m$ factors, it is clear that
\begin{equation}
\gamma_i^{\mu}\otimes \chi_{\widehat{\pi}_{\nu}}(\tilde{x}\hat{\times} \tilde{y})=\gamma_i^{\mu}
\otimes\chi_{\widehat{\pi}_{\nu}}(\tilde{x})\cdot\gamma_i^{\mu}\otimes\chi_{\widehat{\pi}_{\nu}}(\tilde{y}).
\end{equation}
Therefore it is enough to compute $\gamma_i^{\mu}\otimes \chi_{\widehat{\pi}_{\nu}}(g,z^pt_{\rho})$
when  $t_{\rho}=[1,\cdots,\mu_1]$ $\cdots[\mu_1+\cdots+\mu_{s-1}+1,\cdots,n]$
is an $(\mu_1,\cdots,\mu_s)$-cycle. For this purpose, let $e_{i_j}$ be a basis of $V_{i_j}$(as $\Gamma$
is a cyclic group) and let $ge_{i_j}=\gamma_{i_j}(g)e_{i_j}, \gamma_{i_j}(g)\in \mathbb{C}$.
As $t_{\rho}\cdot\gamma_i^{\mu}=\gamma_i^{\mu}$
for $t_{\rho}\in \widetilde{S}_{\mu}$, it follows that
\begin{equation}
\begin{split}
&(g,z^pt_{\rho})(e_{i_1}^{\otimes \mu_1}\otimes\cdots\otimes e_{i_s}^{\otimes\mu_s}\otimes w)\\
=&g_1(e_{i_1})\otimes\cdots\otimes g_{\mu_1}(e_{i_1})\otimes\cdots \\
&\ \ \ \ \ \ \cdots\otimes g_{\mu_1+\cdots+\mu_{s-1}+1}(e_{i_s})\otimes\cdots\otimes g_{\mu_1+\cdots+\mu_s}(e_{i_s})\otimes z^pt_{\rho}(w)\\
=&\gamma_{i_1}(g_{\mu_1}\cdots g_{1})\cdots \gamma_{i_s}( g_n \cdots g_{n-\mu_s})
(e_{i_1}^{\otimes \mu_1}\otimes\cdots\otimes e_{i_s}^{\otimes\mu_s}\otimes z^pt_{\rho}(w))\\
\end{split}
\end{equation}
If for each $j\in \{1,\cdots,s\}$, the cycle-product
$g_{\Sigma_{k=0}^{j}\mu_k}\cdot\dots\cdot
g_{\Sigma_{k=0}^{j-1}\mu_k+2}\cdot g_{\Sigma_{k=0}^{j-1}\mu_{k}+1}$
$(\mu_0=0)$ lies in $c\in \Gamma_{*}$, then we have
$$\gamma_i^{\mu}\otimes \chi_{\widehat{\pi}_{\nu}}
(g,z^pt_{\rho})=\prod_{j=1}^s\big(\prod_{c\in\Gamma_{*}}\gamma_{i_j}(c)^{l(\rho^j(c))}\big)\chi_{\nu}(z^pt_{\rho}).$$
Subsequently
\begin{equation} \label{Q1}
\begin{split}
\Theta_{i}^{\nu}(D_{\rho}^{\pm})=&\pm \sum_{\widetilde{x}\in\widetilde{\Gamma}_n}
\frac{1}{|\widetilde{\Gamma}_{\mu}|}\gamma_i^{\mu}\otimes\chi_{\widehat{\pi}_{\nu}}(\widetilde{x}^{-1}(g,t_{\rho})\widetilde{x})\\
=&\pm K_{\rho}\prod_{j=1}^s\big(\prod_{c\in\Gamma_{*}}\gamma_{i_j}(c)^{l(\rho^j(c))}\big)\chi_{\nu}(t_{\rho}).
\end{split}
\end{equation}
\end{proof}
For $j=1,\cdots,s$, let $\nu^j$ be a partition valued function on $\Gamma_{*}$ and denote
\begin{equation}
\begin{split}
\nu^j
=((\nu^j_{1},\cdots,\nu^j_{j_1}),
\cdots,(\nu^j_{j_1+\cdots+j_{k-1}+1},\cdots,
\nu^j_{{j_1+\cdots+j_{k-1}+j_k}}))
\end{split}
\end{equation}
in $ \mathcal{P}_{\mu_j}(\Gamma_{*})$, where $|\nu^j|=\mu_j$ and $j_1+j_2+\cdots+j_k=l(\nu^j)$. Then
$\nu=\nu^1\cup\cdots\cup\nu^s$ is a partition valued function in $\mathcal{P}_n(\Gamma_{*})$.
For each $c\in\Gamma_{*}$ and a partition $\lambda\in\mathcal P$, we
define the characteristic 
partition $c^{\lambda}\in
\mathcal{P}(\Gamma_{*})$ by
$$c^{\lambda}(c)=\lambda, \ \ c^{\lambda}(c^{'})={\emptyset }, ~\hbox{for}~ c^{'}\neq c.$$

Let $c^{(\tilde{\nu}^j)}:=c_{i_0}^{(\nu^j_{1})}\cup c_{i_1}^{(\nu^j_{2})}\cup\cdots\cup c_{i_{l(\nu^j)}}^{(\nu^j_{l(\nu^j)})}
$, then this union is a characteristic partition valued function in
 $\mathcal{P}_{\mu_j}(\Gamma_{*})$ supported only at $c_{i_0},c_{i_1},\cdots,c_{i_{l(\nu^j)}}$.
  Thus
   $$\tilde{\nu}:=c^{(\tilde{\nu}^1)}\cup\cdots\cup c^{(\tilde{\nu}^s)}$$
  is a  {\it characteristic
partition-valued function} in $\mathcal{P}_{n}(\Gamma_{*})$. Let
$\bar{\nu}=(\bar{\nu}^1,\cdots,\bar{\nu}^s)
=\bigcup_{j=1}^s(\bigcup_{c\in \Gamma_{*}} \nu^j(c))$, where
$\bar{\nu}^j=\bigcup_{c\in \Gamma_{*}} \nu^j(c)$ is a partition of
$\mu_j$. For $\nu, \xi\in \mathcal{P}_n(\Gamma_{*})$, we say they
are in the same class
 if $\bar{\nu}$ and $\bar{\xi}$ have the same partition parts,
and denote by $\bar{\nu}$ (or $\bar{\xi}$) the type of this class. We denote by $[\tilde{\nu}]$ the set
of characteristic partition valued functions with type $\bar{\nu}$. It is easy to see that the cardinality
of $[\tilde{\nu}]$ is $|\Gamma_{*}|^{l(\nu)}$.
For $(g,\sigma)\in \Gamma_n$, if its corresponding partition valued function is $\nu$,  then the cycle type of the permutation $\sigma$ is the type of class $[\tilde{\nu}]$.

For $\mu=(\mu_1,\cdots,\mu_s)\in\Omega$, let $\Delta_{\bar{\nu}^j}$
be an irreducible spin character of $\widetilde{S}_{\mu_j}$
corresponding to a strict partition $\bar{\nu}^j$ of $\mu_j$. Suppose that there are $k$ associate spin and $s-k$
double spin characters in $\{\Delta_{\bar{\nu}^1},\cdots,\Delta_{\bar{\nu}^s}\}$.
We denote by $\chi_{\nu}$ the character of the starred tensor product $\Delta_{\bar{\nu}^1}\circledast\cdots\circledast\Delta_{\bar{\nu}^s}$ when it is a double spin irreducible module.
When the irreducible component $\Delta_{\bar{\nu}^1}\circledast\cdots \circledast\Delta_{\bar{\nu}^s}$ is an
associate spin module, we choose $\chi_{\nu}^{+}$ to be the irreducible character
such that the basic spin character $\chi_{(\bar{\nu}^j_i)}$ 
  ( here each part $\bar{\nu}^j_i$ of the partition $\bar{\nu}^j$ corresponds a basic spin character $\chi_{(\bar{\nu}^j_i)}$)
appears with positive multiplicity in $Res(\chi_{\bar{\nu}^j}^{+})|_{\widetilde{S}_{\bar{\nu}^j_i}}$ 
for each $\bar{\nu}^j_i$.
Then the associated character is denoted by $\chi_{\nu}^{-}$.
Correspondingly the induced character $\Theta_{i}^{\nu}=\mbox{Ind}_{\widetilde{\Gamma}_{\mu}}^{\widetilde{\Gamma}_n}
 (\gamma_i^{\mu}\otimes\chi_{\nu})
 $
is a double spin character when $n-l(\nu)$ is even and $(\Theta_{i}^{\nu})^{\pm}=\mbox{Ind}_{\widetilde{\Gamma}_{\mu}}^{\widetilde{\Gamma}_n}
 (\gamma_i^{\mu}\otimes\chi_{\nu}^{\pm})$ are
associate spin characters when $n-l(\nu)$ is odd.

Let $t_{\nu}=t_{\nu^1}\cdots t_{\nu^s}\in \widetilde{S}_{\mu}$ such that $t_{\nu^i}\in \widetilde{S}_{\mu_i}$ for $i\in \{1,\cdots,s\}$. If each $\nu^i\in \mathcal{SP}^1_{\mu_i}(\Gamma_{*})$  and $s$ is odd, then we have (cf. (\ref{eqf}))
\begin{equation}\label{eq3}
\begin{split}
(\chi_{\nu})^{\pm}(t_{\nu})=&\pm\Delta_{\bar{\nu}^1}\circledast \cdots\circledast \Delta_{\bar{\nu}^s}(t_{\nu})\\
=&\pm(2\sqrt{-1})^{\frac{s-1}{2}}\Delta_{\bar{\nu}^1}(t_{\nu^1})\cdots\Delta_{\bar{\nu}^s}(t_{\nu^s})\\
=&\pm2^{\frac{s-1}{2}}\cdot(\sqrt{-1})^{\frac{n-l(\nu)+2s-1}{2}}\sqrt{\frac{\la_1\cdots\la_{l}}{2^s}}\\
=&\pm(\sqrt{-1})^{\frac{n-l(\nu)+2s-1}{2}}\sqrt{\frac{\la_1\cdots\la_{l}}{2}},
\end{split}
\end{equation}
where $(\la_1,\cdots,\la_l)$ is the type of $\nu$. Hence
\begin{equation}\label{eq4}
\begin{split}
(\chi_{\nu})^{\pm}(t_{\nu})\overline{(\chi_{\nu})^{\pm}(t_{\nu})} =
\frac{\la_1\cdots \la_{l}}{2}.
\end{split}
\end{equation}

\begin{prop}\label{p3} Let $\mu=(\mu_1,\cdots,\mu_s)\in \Omega$, $\la=(\la_1,\cdots,\la_l)$ be the type of $\nu=\nu^1\cup\cdots\cup\nu^s$
and $i\in I$.
If $s$ is odd and each $\nu^j$ is in $\mathcal{SP}_{\mu_j}^1(\Gamma_{*})$, then there are two associate
 irreducible spin characters $(\Theta_{i}^{\nu})^{\pm}$
of $\widetilde{\Gamma}_n$. For $\rho\in \mathcal{SP}^1_n(\Gamma_{*})$, the characters $(\Theta_{i}^{\nu})^{\pm}$ are given according to

(i) when $\rho=\rho^1\cup\cdots\cup\rho^s\in [\tilde{\nu}]$, then
$$(\Theta_{i}^{\nu})^{\pm}(D^{+}_{\rho})=\pm K_{\rho}
\prod_{j=1}^s(\prod_{c\in\Gamma_{*}}\gamma_{i_j}(c)^{l(\rho^j(c))})(\sqrt{-1})^{\frac{n-l(\la)+2s-1}{2}}
\sqrt{\frac{\la_1\cdots \la_{l}}{2}},$$
 where $K_{\rho}$ is the
number of left cosets $T$ of $\widetilde{\Gamma}_{\mu}$ in
$\widetilde{\Gamma}_n$ such that $(g,t_{\rho})T=T$.

(ii) when $\rho\notin [\tilde{\nu}]$, one has $(\Theta_{i}^{\nu})^{\pm}({\rho})=0.$
\end{prop}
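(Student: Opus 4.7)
I would first confirm that the hypotheses place us in the associate spin regime. Since each $\nu^j\in\mathcal{SP}^1_{\mu_j}(\Gamma_*)$, the underlying partition $\bar\nu^j$ is strict with $\mu_j-l(\bar\nu^j)$ odd, so Schur's Theorem~\ref{Q}.(ii) (i.e.\ part (ii) of the theorem preceding Section~3) yields a pair $\Delta_{\bar\nu^j}^{\pm}$ of associate spin characters arising from an irreducible type~$Q$ supermodule of $\widetilde{S}_{\mu_j}$. Iterating Proposition~\ref{tensorprod}(iii) over the $s$ type~$Q$ factors with $s$ odd shows that the starred product $\chi_\nu=\Delta_{\bar\nu^1}\circledast\cdots\circledast \Delta_{\bar\nu^s}$ is again of type~$Q$, furnishing an associate pair $\chi_\nu^{\pm}$ on $\widetilde{S}_\mu$. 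Tensoring with the linear character $\gamma_i^\mu$ of $\Gamma^n$ and inducing via Mackey--Wigner then produces the asserted associate pair $(\Theta_i^\nu)^{\pm}$ of irreducible spin characters of $\widetilde{\Gamma}_n$.

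\textbf{Main computation.} Next I would rerun the coset argument of Proposition~\ref{prop1} verbatim with $\chi_\nu^{\pm}$ in place of $\chi_\nu$: the action on $e_{i_1}^{\otimes\mu_1}\otimes\cdots\otimes e_{i_s}^{\otimes\mu_s}\otimes w$ factors across blocks exactly as before, and summing over left cosets fixing $(g,t_\rho)$ yields
\begin{equation*}
(\Theta_i^\nu)^{\pm}(D_\rho^+)=K_\rho\prod_{j=1}^s\Big(\prod_{c\in\Gamma_*}\gamma_{i_j}(c)^{l(\rho^j(c))}\Big)\chi_\nu^{\pm}(t_\rho).
\end{equation*}
For case~(i), $\rho\in[\tilde\nu]$ forces $\bar\rho^j=\bar\nu^j$ in each block, so $t_{\rho^j}$ is $\widetilde{S}_{\mu_j}$-conjugate to $t_{\nu^j}$ in the split $+$ class. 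Applying the starred product formula~(\ref{eqf}) with $k=s$ type~$Q$ factors and all $d(t_{\rho^j})=1$, then substituting Schur's explicit values $(\Delta_{\bar\nu^j})^{\pm}(t_{\nu^j})=(\sqrt{-1})^{(\mu_j-l(\bar\nu^j)+1)/2}\sqrt{\bar\nu^j_1\cdots \bar\nu^j_{l(\bar\nu^j)}/2}$ reproduces the arithmetic of~(\ref{eq3}) and gives
$\chi_\nu^{\pm}(t_\rho)=\pm(\sqrt{-1})^{(n-l(\lambda)+2s-1)/2}\sqrt{\lambda_1\cdots\lambda_l/2}$, from which case~(i) follows by substitution.

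\textbf{Case (ii) and main obstacle.} For case~(ii), $\rho\notin[\tilde\nu]$ means no admissible decomposition $\rho=\rho^1\cup\cdots\cup\rho^s$ with $|\rho^j|=\mu_j$ can satisfy $\bar\rho^j=\bar\nu^j$ for all $j$, so in at least one factor the underlying partition $\bar\rho^j$ differs from $\bar\nu^j$; since $\rho\in\mathcal{SP}^1_n(\Gamma_*)$, the partition $\bar\rho^j$ is either strict (and hence lies in $\mathcal{SP}^1_{\mu_j}\setminus\{\bar\nu^j\}$) or lies outside $\mathcal{OP}_{\mu_j}\cup\mathcal{SP}^1_{\mu_j}$. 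Schur's Theorem~(ii) gives $(\Delta_{\bar\nu^j})^{\pm}(t_{\rho^j})=0$ in either case, so the starred product vanishes and therefore $(\Theta_i^\nu)^{\pm}(D_\rho^+)=0$. The main obstacle I anticipate is the sign bookkeeping: one must verify that the choice of $\pm$ labelling $\chi_\nu^{\pm}$ (pinned down before the proposition by positivity of the basic spin multiplicities in $\mathrm{Res}(\chi_{\bar\nu^j}^{+})|_{\widetilde{S}_{\bar\nu^j_i}}$) is compatible with the choices of $\Delta_{\bar\nu^j}^{\pm}$ entering~(\ref{eqf}), so that the $\pm$ in the final formula correctly labels $D_\rho^{+}$ versus $D_\rho^{-}$. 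This reduces to a finite sign check against Schur's value formula, and is the only nontrivial piece beyond the direct adaptation of Proposition~\ref{prop1}.
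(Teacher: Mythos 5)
Your setup and your treatment of part (i) follow the paper's route: existence of the associate pair comes from iterating the type-$Q$ tensor product rule over an odd number of type-$Q$ factors, and the value on $[\tilde\nu]$ comes from Proposition \ref{prop1} together with the computation (\ref{eq3}). The sign bookkeeping you flag is indeed the only remaining care needed there.

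The genuine gap is in part (ii). You argue that for $\rho\notin[\tilde\nu]$ some block must have $\bar\rho^j\neq\bar\nu^j$, and that such a $\bar\rho^j$ "is either strict \ldots or lies outside $\mathcal{OP}_{\mu_j}\cup\mathcal{SP}^1_{\mu_j}$," whence $\Delta_{\bar\nu^j}^{\pm}(t_{\rho^j})=0$. This dichotomy is false: the hypothesis $\rho\in\mathcal{SP}^1_n(\Gamma_*)$ only forces the \emph{global} parity $d(\rho)=\sum_j d(\rho^j)$ to be odd, so an individual block can have $\bar\rho^j\in\mathcal{OP}_{\mu_j}$, where the associate spin character $\Delta_{\bar\nu^j}^{\pm}$ is generically nonzero (its values there are governed by the Schur $Q$-function expansion (\ref{Q})). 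Concretely, in the paper's second example with $\mu=(5,4,4)$ and $\bar\nu=((3,2),(4),(4))$, take $\rho$ of type $(5,4,3,1)$: it lies in $\mathcal{SP}^1_{13}(\Gamma_*)$ and not in $[\tilde\nu]$, yet it admits the block decomposition $(5)\cup(4)\cup(3,1)$ in which $\Delta_{(3,2)}((5))=1$, $\Delta_{(4)}^{\pm}((4))=\mp\sqrt{2}$ and $\Delta_{(4)}((3,1))=1$ are all nonzero. Thus individual coset contributions to the induced character do not vanish; the asserted vanishing of $(\Theta_i^{\nu})^{\pm}(\rho)$ comes from cancellation across cosets (and across the several admissible block decompositions of $\rho$), which your termwise argument cannot detect. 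The paper avoids this entirely with a norm-saturation argument: since $(\Theta_i^{\nu})^{+}\oplus(\Theta_i^{\nu})^{-}$ is an irreducible supermodule of type $Q$, the portion of $\langle(\Theta_i^{\nu})^{\pm},(\Theta_i^{\nu})^{\pm}\rangle$ supported on $\mathcal{OP}_n(\Gamma_*)$ equals $1/2$, hence so does the portion supported on $\mathcal{SP}^1_n(\Gamma_*)$; a lower bound via (\ref{Inner}), (\ref{eq4}) and $|[\tilde\nu]|=(r+1)^{l(\rho)}$ shows the classes in $[\tilde\nu]$ already contribute at least $1/2$, forcing the character to vanish on all odd strict classes outside $[\tilde\nu]$. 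You would need to replace your direct argument for (ii) by an argument of this kind, or else carry out the coset cancellation explicitly, which is substantially harder.
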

\begin{proof} The first assertion follows from Proposition
\ref{prop1} and equation (\ref{eq3}). As for the second statement,
by the standard inner product we have
\begin{align}\label{1}\nonumber
&\langle (\Theta_{i}^{\nu})^{\pm},(\Theta_{i}^{\nu})^{\pm}\rangle \\
=&(\sum_{\rho\in \mathcal{OP}_n(\Gamma_{*})}
+\sum_{\rho\in \mathcal{SP}^1_n(\Gamma_{*})})\frac{1}{\widetilde{Z}_{\rho}}(\Theta_{i}^{\nu})^{\pm}(\rho)\overline{(\Theta_{i}^{\nu})^{\pm}(\rho)}.
\end{align}
 Recall that $(\Theta_{i}^{\nu})^{+}\oplus(\Theta_{i}^{\nu})^{-}$ can be regarded as the character of an irreducible supermodule of type $Q$.
 Hence by the inner product of super characters and the fact that $(\Theta_{i}^{\nu})^{+}(\rho)=(\Theta_{i}^{\nu})^{-}(\rho)$ when $\rho\in \mathcal{OP}_n(\Gamma_{*})$. It is easy to see that
\begin{equation}\label{3}
\sum_{\rho\in
\mathcal{OP}_n(\Gamma_{*})}\frac{1}{\widetilde{Z}_{\rho}}
(\Theta_{i}^{\nu})^{\pm}(\rho)\overline{(\Theta_{i}^{\nu})^{\pm}(\rho)}=1/2,
\end{equation}
so the second summand in (\ref{1}) should
also be equal to $\frac{1}{2}$. On the other hand, by equation (\ref{Inner}) it follows that
\begin{equation}\label{2}
\begin{split}
& \sum_{\rho\in \mathcal{SP}^1_n(\Gamma_{*})}
\frac{1}{\widetilde{Z}_{\rho}}(\Theta_{i}^{\nu})^{\pm}(\rho)\overline{(\Theta_{i}^{\nu})^{\pm}(\rho)}\\
=&\sum_{\rho\in
\mathcal{SP}^1_n(\Gamma_{*})}\frac{1}{\widetilde{Z}_{\rho}}
(\gamma_i^{\mu}\otimes\chi_{\widehat{\pi}_{\nu}}^{\pm})(\rho)
\overline{(\gamma_i^{\mu}\otimes\chi_{\widehat{\pi}_{\nu}}^{\pm})(\rho)} \ \ (\hbox{by}~ (\ref{Inner}))\\
  \geq&\sum_{\rho\in [\tilde{\nu}]}\frac{2}{\widetilde{Z}_{\rho}}
(\gamma_i^{\mu}\otimes\chi_{\widehat{\pi}_{\nu}}^{\pm})(D^{+}_{\rho})
\overline{(\gamma_i^{\mu}\otimes\chi_{\widehat{\pi}_{\nu}}^{\pm})(D^{+}_{\rho})}
\ \ (\hbox{as}~ D_{\rho}^{-}=zD_{\rho}^{+})\\
\geq&\sum_{\rho\in[\tilde{\nu}]}\frac{\chi_{\nu}^{\pm}(t_{\rho})\overline{\chi_{\nu}^{\pm}(t_{\rho})}}{\prod_{j=1}^s(\prod_{c\in\Gamma_{*}}z_{\rho^j(c)}\zeta_c^{l(\rho^j(c))})}
 \ \
(\hbox{as}~ |\gamma_{i_j}(c)|^2=1)\\
\geq&\sum_{\rho\in[\tilde{\nu}]}\frac{1}{\la_1\cdots\la_l\cdot(r+1)^{l(\rho)}}
\cdot\frac{\la_1\cdots\la_l}{2} \ \ (\hbox{by} ~ (\ref{eq4}) )  \\
\geq&\frac{1}{2}\ \ (\hbox{as}~ |[\widetilde{\nu}]|=(r+1)^{l(\rho)}).
\end{split}
\end{equation}
Combining (\ref{1}), (\ref{3}) and (\ref{2}) we have that
\begin{align*}
\frac12&=\sum_{\rho\in \mathcal{SP}^1_n(\Gamma_{*})}\frac{1}{\widetilde{Z}_{\rho}}(\Theta_{i}^{\nu})^{\pm}(\rho)
\overline{(\Theta_{i}^{\nu})^{\pm}(\rho)}\\
&\geq \sum_{\rho\in [\tilde{\nu}]}\frac{1}{\widetilde{Z}_{\rho}}(\Theta_{i}^{\nu})^{\pm}(\rho)\overline{(\Theta_{i}^{\nu})^{\pm}(\rho)}\geq\frac12
\end{align*}
which forces $(\Theta_{i}^{\nu})^{\pm}({\rho})=0$ if
$\rho\notin [\tilde{\nu}]$.
\end{proof}

\begin{example} Consider $\widetilde{\Gamma}_{13}$ with $\Gamma=\langle a|a^3=1\rangle$. Let $\Gamma^{*}=\{\gamma_0,\gamma_1,\gamma_2\}$ and
$\Gamma_{*}=\{c^0,c^1,c^2\}$, where $\gamma_i(c^j)=w^{ij}$, and $w=-\frac12+\frac{\sqrt{-3}}2$.
The irreducible characters of $\Gamma^{13}$ are classified into $|\mathcal{P}_{13}|$ orbits
under the action of $S_{13}$. Here $\mathcal{P}_{13}$ is the set of partitions of $13$.
We list some of these class orbits and compute the associated spin characters as follows.

The first class orbit is $\Phi_{(13)}=\{\mathcal{O}(\gamma_{i}^{\otimes 13})=\{\gamma_{i}^{\otimes 13}\}|i=0,1,2\},$ then
$$T_{(13)}=\{z^pt_{\rho}\in \widetilde{S}_{13}|z^pt_{\rho}\cdot \gamma_{i}^{\otimes 13}=\gamma_{i}^{\otimes 13}\}\simeq \widetilde{S}_{13}
,\ \  \widetilde{\Gamma}_{(13)}=\Gamma^{13}\ltimes T_{(13)}=\widetilde{\Gamma}_{13}.$$
For $i=1, \nu=((5,4,3,1)_c)_{c\in\Gamma_{*}}\in \mathcal{SP}^1_{13}({\Gamma_{*}})$ and
$\rho=((54)_{c^0},(31)_{c^2})\in[\widetilde{\nu}]$
 (i.e. there are one 5-cycle and one 4-cycle such that their cycle-products lie in $c^0$,
 the same is true for $(31)_{c^2}$), the type of the class $[\tilde{\nu}]$ is $\la=(5,4,3,1)$. Then
 \begin{equation}
\begin{split}
&(\gamma_{1}^{\otimes 13}\otimes\Delta_{\bar{\nu}}^{\pm})(D_{\rho}^{+})\\
 =& \gamma_1(c^0)^{l(\rho(c^0))}\cdot\gamma_1(c^2)^{l(\rho(c^2))}\cdot\Delta_{\bar{\nu}}^{\pm}(t_{\rho})\\
 =&\pm1\cdot (w^2)^2\cdot (\sqrt{-1})^{\frac{13-4+2-1}{2}}\sqrt{\frac{5\times4\times3\times1}{2}}\\
 =& \pm\sqrt{-30}w.
 \end{split}
 \end{equation}
 and $(\gamma_{1}^{\otimes 13}\otimes\Delta_{\bar{\nu}}^{\pm})(\rho)=0$ if $\rho\notin[\widetilde{\nu}].$ As
\begin{equation}
\begin{split}
&\sum_{\rho\in[\tilde{\nu}]}\frac{2}{\widetilde{Z}_{\rho}}(\gamma_{1}^{\otimes 13}
\otimes\Delta_{\bar{\nu}}^{\pm})(D_{\rho}^{+})\overline{(\gamma_{1}^{\otimes
13}\otimes\Delta_{\bar{\nu}}^{\pm})
(D_{\rho}^{+})}\\
=& \sum_{\rho\in[\tilde{\nu}]}\frac{2}{2Z_{\rho}}\cdot \frac{(\pm\sqrt{-30}w)\cdot(\pm\overline{\sqrt{-30}w})}{z_{(5,4)}\zeta_{c^0}^2z_{(3,1)}\zeta_{c^2}^2}\\
=&|\Gamma_{*}|^4\cdot\frac{30}{5\cdot4\cdot3^2\cdot3\cdot1\cdot 3^2}\ \ (\hbox{as} \ \ \zeta_c=|\Gamma|=3)\\
=&\frac{1}{2}.
\end{split}
\end{equation}

The second class orbit is
$\Phi_{(5,4,4)}=\{\mathcal{O}(\gamma_{i}^{\otimes5}\otimes
\gamma_{j}^{\otimes 4}\otimes\gamma_{k}^{\otimes 4})|i,j,k=0,1,2\},$
$$T_{(5,4,4)}=\{z^pt_{\rho}\in \widetilde{S}_{13}|z^pt_{\rho}\cdot \gamma_{i}^{\otimes5}
\otimes\gamma_{j}^{\otimes 4}\otimes\gamma_{k}^{\otimes 4}=\gamma_{i}^{\otimes5}\otimes\gamma_{j}^{\otimes 4}
\otimes\gamma_{k}^{\otimes 4}\}\simeq \widetilde{S}_5\hat{\times}\widetilde{S}_4\hat{\times}\widetilde{S}_4,$$
$$\widetilde{\Gamma}_{(5,4,4)}=\Gamma^{13}\ltimes (\widetilde{S}_5\hat{\times}
\widetilde{S}_4\hat{\times}\widetilde{S}_4)
=\widetilde{\Gamma}_5\hat{\times}\widetilde{\Gamma}_4\hat{\times}\widetilde{\Gamma}_4.$$
For $i=2, j=1,k=0$ and $\bar{\nu}=((3,2),(4),(4))$, let
$\chi_{\nu}^{\pm}:=\pm\Delta_{\bar{\nu}^1}\circledast
\Delta_{\bar{\nu}^2}\circledast\Delta_{\bar{\nu}^3}$. For
$\rho=\rho^1\cup\rho^2\cup\rho^3=((3)_{c^1},(2)_{c^2})\cup((4)_{c^2})\cup((4)_{c^1})\in
\mathcal{SP}^1_{13}(\Gamma_{*})$, then
\begin{equation}
\begin{split}
&\gamma_{2}^{\otimes 5}\otimes \gamma_{1}^{\otimes 4}\otimes\gamma_{0}^{\otimes 4}
\otimes\chi_{\nu}^{\pm}(D_\rho^{+})\\
=&\gamma_2(c^1)^{l(\rho^1(c^1))}\cdot\gamma_2(c^2)^{l(\rho^1(c^2))}\cdot\gamma_1(c^2)^{l(\rho^2(c^2))}\cdot\gamma_0(c^1)^{l(\rho^3(c^1))}\chi_{\nu}^{\pm}(D_\rho^{+})\\
=&\pm w^2\cdot w^4\cdot w^2\cdot w^0\cdot(\sqrt{-1})^{\frac{13-4+6-1}{2}}\sqrt{\frac{3\times2\times4\times4}{2}}\\
=&\mp4\sqrt{-3}w^2,
\end{split}
\end{equation}
and $\gamma_{2}^{\otimes 5}\otimes \gamma_{1}^{\otimes 4}\otimes \gamma_{0}^{\otimes 4}
\otimes\chi_{\nu}^{\pm}\uparrow_{\widetilde{\Gamma}_{(5,4,4)}}^{\widetilde{\Gamma}_n}(\rho)=0$
 if $\rho\notin[\widetilde{\nu}]$. We see that

\begin{equation}\nonumber
\begin{split}
& \sum_{\rho\in[\tilde{\nu}]}\frac{2}{\widetilde{Z}_{\rho}}\gamma_{2}^{\otimes 5}\otimes \gamma_{1}^{\otimes 4}\otimes \gamma_{0}^{\otimes 4}\otimes\chi_{\nu}^{\pm}(D_{\rho}^{+})\overline{\gamma_{2}^{\otimes 5}\otimes \gamma_{1}^{\otimes 4}\otimes \gamma_{0}^{\otimes 4}\otimes\chi_{\nu}^{\pm}(D_{\rho}^{+})}\\
=&|\Gamma_{*}|^4\cdot\frac{1}{3\cdot2\cdot4\cdot4\cdot3^4}\cdot
|\mp4\sqrt{-3}w^2|^2\\
=&\frac{1}{2},
\end{split}
\end{equation}
where the type of the class $[\tilde{\nu}]$ is $\la=(4,4,3,2)$.
\end{example}

\begin{prop}
Let $\mu=(\mu_1,\cdots,\mu_s)\in \Omega$, $i\in I$, and $\nu=\nu^1\cup\cdots\cup\nu^s\in\mathcal{SP}^0_n(\Gamma_{*})$ such that
each $\nu^i\in \mathcal{SP}^0_{\mu_i}(\Gamma_{*})$. Then there is an irreducible  double spin character $\Theta_{i}^{\nu}$ of $\widetilde{\Gamma}_n$. Moreover,

(i) when $\rho=\rho^1\cup\cdots\cup\rho^s\in \mathcal{ OP}_n(\Gamma_{*})$ and $||\rho^i||=\mu_i$,
$$\Theta_{i}^{\nu}(D^{+}_{\rho})= 
\prod_{j=1}^s(\prod_{c\in\Gamma_{*}}\gamma_{i_j}(c)^{l(\rho^j(c))})\Delta_{\bar{\nu}^j}(t_{\rho^j})K_{\rho},$$
the values $\Theta_{i}^{\nu}(D_{\rho}^{+})$ are determined by the wreath product
Schur Q-functions (see \cite{FJW}).

(ii) otherwise, one has $\Theta_{i}^{\nu}(\rho)=0$.
\end{prop}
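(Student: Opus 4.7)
The plan is to derive both parts directly from Proposition~\ref{prop1} by combining the factorization of the starred tensor product (equation~(\ref{eqf})) with Schur's vanishing theorem for double spin characters of $\widetilde{S}_n$. Since each $\nu^j\in\mathcal{SP}^0_{\mu_j}(\Gamma_*)$, the parity $d(\bar{\nu}^j)=\mu_j-l(\bar{\nu}^j)$ is even, so Schur's theorem yields a unique double spin irreducible character $\Delta_{\bar{\nu}^j}$ of $\widetilde{S}_{\mu_j}$, coming from a supermodule of type $M$. Applying Proposition~\ref{tensorprod} inductively (equivalently, taking $k=0$ in~(\ref{eqf})), the starred product $\chi_\nu=\Delta_{\bar{\nu}^1}\circledast\cdots\circledast\Delta_{\bar{\nu}^s}$ is a single double spin character of $\widetilde{S}_\mu$ with the clean factorization
$$\chi_\nu(x_1\cdots x_s)=\prod_{j=1}^s\Delta_{\bar{\nu}^j}(x_j),\qquad x_j\in\widetilde{S}_{\mu_j};$$
the induced character $\Theta_i^\nu$ is therefore itself a single double spin character of $\widetilde{\Gamma}_n$ rather than an associate pair, in agreement with the Mackey--Wigner parameterization.

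Part (i) is then a direct calculation: substituting $x_j=t_{\rho^j}$ into the factorization of $\chi_\nu$ gives $\chi_\nu(t_\rho)=\prod_{j=1}^s\Delta_{\bar{\nu}^j}(t_{\rho^j})$, and inserting this into Proposition~\ref{prop1} (with $+$ sign, since there is no associate partner) yields the asserted formula.

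For part (ii), $\Theta_i^\nu$ is a spin character and so automatically vanishes on non-split classes; it therefore suffices to verify vanishing on the remaining split classes, namely those indexed by $\rho\in\mathcal{SP}^1_n(\Gamma_*)$. The crucial observation is the parity obstruction $\mathcal{OP}_n(\Gamma_*)\cap\mathcal{SP}^1_n(\Gamma_*)=\emptyset$: if every part of every $\rho(c)$ were odd, then $|\rho(c)|\equiv l(\rho(c))\pmod{2}$ for each $c$, forcing $d(\rho)=||\rho||-l(\rho)$ to be even and contradicting $\rho\in\mathcal{SP}^1_n(\Gamma_*)$. Hence some $\rho(c_0)$ must contain an even part, and in any decomposition $\rho=\rho^1\cup\cdots\cup\rho^s$ with $|\rho^j|=\mu_j$, that even part lies in some $\bar{\rho}^{j_0}$, so $\bar{\rho}^{j_0}\notin\mathcal{OP}_{\mu_{j_0}}$. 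Schur's theorem now forces $\Delta_{\bar{\nu}^{j_0}}(t_{\rho^{j_0}})=0$, hence $\chi_\nu(t_\rho)=0$, and Proposition~\ref{prop1} delivers $\Theta_i^\nu(\rho)=0$.

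The main subtlety lies in part (ii): the argument hinges on the parity identity above, which captures precisely why the $\mathcal{SP}^0$ assumption on $\nu$ forces $\Theta_i^\nu$ to be supported only on the $\mathcal{OP}_n$-classes. Once this observation is isolated, both parts follow by a formal assembly of Proposition~\ref{prop1}, the factorization of $\chi_\nu$, and Schur's vanishing theorem.
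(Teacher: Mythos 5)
Your proof is correct, and part (i) coincides with the paper's argument (Proposition~\ref{prop1} plus the $k=0$ case of (\ref{eqf})). For part (ii), however, you take a genuinely different route. The paper argues softly via norms: since $\nu\in\mathcal{SP}^0_n(\Gamma_*)$, the character $\Theta_i^\nu$ is double spin, hence comes from a type~$M$ supermodule, so its inner product with itself computed over the $\mathcal{OP}_n(\Gamma_*)$-classes alone already equals $1=\langle\Theta_i^\nu,\Theta_i^\nu\rangle_{\widetilde{\Gamma}_n}$; the contribution of the odd strict classes is therefore zero, forcing pointwise vanishing there. You instead argue pointwise and directly: the parity identity (every part odd implies $|\rho(c)|\equiv l(\rho(c))\pmod 2$, hence $d(\rho)$ even) shows $\mathcal{OP}_n(\Gamma_*)\cap\mathcal{SP}^1_n(\Gamma_*)=\emptyset$, so any $\rho\in\mathcal{SP}^1_n(\Gamma_*)$ has an even part, which lands in some factor $\bar\rho^{j_0}\notin\mathcal{OP}_{\mu_{j_0}}$ of any admissible decomposition, and Schur's vanishing theorem for the double spin character $\Delta_{\bar\nu^{j_0}}$ kills every term of the induced character sum. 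Your argument is more elementary and explains the vanishing locally (it even survives the subtlety that different cosets may produce different decompositions of $\rho$, since each term vanishes separately), whereas the paper's norm argument is uniform with the technique it uses for Proposition~\ref{p3} and the main theorem. Two small points: you should also record, as the paper does, that $\Theta_i^\nu(\rho)=0$ when $\rho$ admits no decomposition with $\|\rho^i\|=\mu_i$ (this is the trivial case where no conjugate meets the Young subgroup, and it is part of the ``otherwise''); and the vanishing on non-split classes, which you correctly invoke, is the standard property of spin characters.
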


\begin{proof}
(i) The expression of $\Theta_{i}^{\nu}(D_{\rho}^{+})$ follows from Proposition \ref{prop1} and equation (\ref{eqf})
(the case of $k=0$). Moreover, if $\rho^i\in \mathcal{OP}_{\mu_i}(\Gamma_{*})$ then $\Delta_{\bar{\nu}^i}(t_{\rho^i})$ is determined by Schur Q-functions as in $(\ref{Q})$, hence the values  $\Theta_{i}^{\nu}(D_{\rho}^{+})$ are determined by wreath products of
Schur Q-functions.

(ii) If $\rho$ can not be decomposed as $\rho^1\cup\cdots\cup\rho^s$ such that $||\rho^i||=\mu_i$,
 then by the theory of induced characters it is easy to see that $\Theta_{i}^{\nu}({\rho})=0$. As $\nu\in \mathcal{SP}^0_n(\Gamma_{*})$, the corresponding
irreducible character $\Theta_{i}^{\nu}$ is a double spin character. Thus it can be regarded as an
irreducible super character
of type $M$.  Hence
$$<\Theta_{i}^{\nu},\Theta_{i}^{\nu}>_{\widetilde{\Gamma}_n}=1
=<\Theta_{i}^{\nu},\Theta_{i}^{\nu}>_{\mathcal{OP}_n(\Gamma_{*})}$$
which forces $\Theta_{i}^{\nu}({\rho})=0$ for
all $\rho=\rho^1\cup\cdots\cup\rho^s\in \mathcal{SP}_n^1(\Gamma_{*})$.
\end{proof}

For $\nu=\nu^1\cup\cdots\cup\nu^s\in \mathcal{SP}_n(\Gamma_{*})$,
let $J=\{j_1,\cdots,j_{k}\}$  be a maximal proper subset of
$\{1,\cdots,s\}$ such that $\nu^{i}$ is in
$\mathcal{SP}^1_{\mu_{i}}(\Gamma_{*})$ for $i\in
\{j_1,\cdots,j_{k}\}$. Let $J^{'}$ be the complement of $J$ in
$\{1,\cdots,s\}$. For $\rho=\rho^1\cup\cdots\cup\rho^s\in
\mathcal{SP}_n^1(\Gamma_{*})$, one sees that if
$\Delta_{\bar{\nu}^i}(t_{\rho^i})\neq 0$ then $\rho^i$
must be in $[\tilde{\nu}^i]$ for $i\in J$, and $\rho^i$ must be in
$\mathcal{OSP}_{\mu_i}(\Gamma_{*}):=\mathcal{OP}_{\mu_i}(\Gamma_{*})\cap
\mathcal{SP}_{\mu_i}(\Gamma_{*})$ for $i\in J^{'}$. So we have the
following results.

\begin{theorem}
Let $\nu=\nu^1\cup\cdots\cup\nu^s\in \mathcal{SP}^1_n(\Gamma_{*})$ and $i\in I$. Let
$\mu=(\mu_1,\cdots,\mu_s)\in \Omega$ be the weight partition
with $\mu_i=|\nu^i|$.
The character values of $(\Theta_{i}^{\nu})^{\pm}$ are computed as follows.
(i) When $\rho=\rho^1\cup\cdots\cup\rho^s \in\mathcal{SP}^1_n(\Gamma_{*})$ satisfies $\rho^i\in [\tilde{\nu}^i]$ for $i\in J$ and $\rho^i\in\mathcal{OSP}_{\mu_i}(\Gamma_{*})$ for $i\in J^{'}$, then
\begin{equation}\nonumber
\begin{split}(\Theta_{i}^{\nu})^{\pm}(D^{+}_{\rho})
=&\pm (\sqrt{-1})^{\frac{\sum_{j\in J}(\mu_j-l(\nu^j))+2|J|-1}{2}}\sqrt{\frac{\prod_{j\in
J}(\prod_{c\in\Gamma_{*}}\nu^j(c))}{2}}\cdot\\
&\prod_{j=1}^s(\prod_{c\in\Gamma_{*}}\gamma_{i_j}(c)^{l(\rho^j(c))})\cdot\prod_{j\in
J^{'}}\Delta_{\bar{\nu}^j}(t_{\rho^j})\cdot K_{\rho},
\end{split}
\end{equation}
where $K_{\rho}$ is the number of left cosets $T$ of
$\widetilde{\Gamma}_{\mu}$ in $\widetilde{\Gamma}_n$ such that
$(g,t_{\rho})T=T$, and the value of $\prod_{j\in
J^{'}}\Delta_{\bar{\nu}^j}(t_{\rho^j})$ is determined by the wreath
products of Schur Q-functions (see \cite{FJW}).

(ii) $(\Theta_{i}^{\nu})^{\pm}(\rho)=0$, otherwise.
\end{theorem}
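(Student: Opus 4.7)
The plan is to combine Proposition \ref{prop1} with the starred tensor product formula (\ref{eqf}), splitting the $s$ factors according to the partition $\{1,\ldots,s\} = J\sqcup J'$. First I apply Proposition \ref{prop1} to the induced character $(\Theta_{i}^{\nu})^{\pm}$, which expresses it as the product of $\pm K_{\rho}$, the character $\prod_{j=1}^{s}\prod_{c\in\Gamma_{*}}\gamma_{i_j}(c)^{l(\rho^j(c))}$ coming from the $\Gamma^n$-factor, and the value $\chi_{\nu}^{\pm}(t_{\rho})$ of the starred tensor product $\Delta_{\bar{\nu}^1}\circledast\cdots\circledast\Delta_{\bar{\nu}^s}$ at $t_{\rho}=t_{\rho^1}\cdots t_{\rho^s}$. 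Applying (\ref{eqf}) with $k=|J|$ then factorizes $\chi_{\nu}^{\pm}(t_{\rho})$ as $\pm 2^{[k/2]}(\sqrt{-1})^{[k/2]\prod_{j\in J}d(t_{\rho^j})}\prod_{j=1}^{s}\Delta_{\bar{\nu}^j}(t_{\rho^j})$, reducing everything to evaluations of characters of the symmetric-group covers $\widetilde{S}_{\mu_j}$.

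Next I invoke Schur's theorem to handle each $\Delta_{\bar{\nu}^j}(t_{\rho^j})$ separately. For $j\in J$, the associate spin character $\Delta_{\bar{\nu}^j}^{\pm}$ vanishes on $\mathcal{SP}^1_{\mu_j}$ except on the self-class $[\tilde{\nu}^j]$, where it equals $(\sqrt{-1})^{(\mu_j-l(\nu^j)+1)/2}\sqrt{\nu^j_{1}\cdots\nu^j_{l(\nu^j)}/2}$; for $j\in J'$, the double spin character $\Delta_{\bar{\nu}^j}$ vanishes off $\mathcal{OP}_{\mu_j}$, and the distinctness inherited from $\rho\in\mathcal{SP}_n^1(\Gamma_{*})$ forces $\rho^j\in\mathcal{OSP}_{\mu_j}(\Gamma_{*})$. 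A crucial parity check is that $|J|$ is necessarily odd: since $d(\nu^j)$ is odd exactly for $j\in J$, one has $d(\nu)\equiv |J|\pmod 2$, and $d(\nu)$ is odd by the hypothesis $\nu\in\mathcal{SP}_n^1(\Gamma_{*})$. Consequently $d(t_{\rho^j})=1$ for every $j\in J$, so $[k/2]=(k-1)/2$ and the combined $\sqrt{-1}$-exponents and powers of $2$ collapse into precisely the exponent $(\sum_{j\in J}(\mu_j-l(\nu^j))+2|J|-1)/2$ and the square root $\sqrt{\prod_{j\in J}(\prod_{c\in\Gamma_{*}}\nu^j(c))/2}$ appearing in (i), with the surviving contribution $\prod_{j\in J'}\Delta_{\bar{\nu}^j}(t_{\rho^j})$ left as the wreath-product Schur $Q$-function evaluation from \cite{FJW}.

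The main obstacle will be the vanishing assertion (ii), because while each individual factor vanishes on most of the excluded classes, one must still rule out any accidental cancellation coming from summing over the double coset representatives in the induction. I would handle this by an inner product argument modeled on the proof of Proposition \ref{p3}: since $(\Theta_{i}^{\nu})^{+}\oplus(\Theta_{i}^{\nu})^{-}$ is the character of an irreducible supermodule of type $Q$, the $\mathcal{SP}^1$-part of $\langle (\Theta_{i}^{\nu})^{\pm},(\Theta_{i}^{\nu})^{\pm}\rangle$ must equal $\frac12$. Bounding this sum from below using only the classes allowed in (i), together with formula (\ref{Inner}), the cardinalities $|[\tilde{\nu}^j]|=|\Gamma_{*}|^{l(\nu^j)}$, and the identity $|\chi_{\nu}^{\pm}(t_{\nu})|^{2}=\prod_{j\in J}(\prod_{c}\nu^j(c))/2$ coming from the explicit formula in (i), already saturates $\frac12$. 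This forces the character to vanish on every remaining split class in $\mathcal{SP}_n^1(\Gamma_{*})$, yielding (ii).
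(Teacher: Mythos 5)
Your proposal is correct and follows essentially the same route as the paper: part (i) by combining Proposition \ref{prop1} with the starred-product formula (\ref{eqf}) and Schur's theorem (the paper cites (\ref{Q1}), (\ref{eq3}) and Proposition \ref{p3} for exactly this), and part (ii) by the inner-product saturation argument modeled on Proposition \ref{p3}, which is precisely the paper's argument with $|J|=2m-1$. Your explicit parity check that $|J|$ must be odd is a point the paper leaves implicit, but it does not change the approach.
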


\begin{proof}
The first assertion follows from
(\ref{Q1}), (\ref{eq3}) and Proposition \ref{p3}.

Now we consider the second part. For
$\nu=\nu^1\cup\cdots\cup\nu^s\in \mathcal{SP}^1_n(\Gamma_{*})$, let us
assume that
$\Delta_{\bar{\nu}^1},\cdots,\Delta_{\bar{\nu}^{2m-1}}$ are associate spin and $\Delta_{\bar{\nu}^{2m}},\cdots,\Delta_{\bar{\nu}^s}$ are double spin. Then  by (\ref{eq3}) and the second equation
of (\ref{2}), we have
\begin{equation}\label{eq:2}
\begin{split}
& \sum_{\rho\in \mathcal{SP}^1_n(\Gamma_{*})}\frac{1}{\widetilde{Z}_{\rho}}(\Theta_{i}^{\nu})^{\pm}(\rho)\overline{(\Theta_{i}^{\nu})^{\pm}(\rho)}\\
=&\sum_{\rho\in
\mathcal{SP}_n^1(\Gamma_{*})}\frac{2^{2(m-1)}}{\widetilde{Z}_{\rho}}
\big|\prod\limits_{j=1}^s(\prod\limits_{c\in\Gamma_{*}}\gamma_{i_j}(c)^{l(\rho^j(c))})\Delta_{\bar{\nu}^j}(\rho^j)\big|^2\\
=&\sum_{\rho=\rho^1\cup\cdots \cup\rho^s\in
\mathcal{SP}_n^1(\Gamma_{*})}
 \frac{2^{2(m-1)}}{\widetilde{Z}_{\rho}}
\big|\prod\limits_{j=1}^s\Delta_{\bar{\nu}^j}(\rho^j)\big|^2  \ \ (\hbox{as} \ \  |\gamma_{i_j}(c)|^2=1)\\
\geq&\sum_{\rho^j\in[\tilde{\nu}^j]:j\in J;\rho^j\in
\mathcal{SP}_{\mu_j}(\Gamma_{*}):j\in J^{'}}2^{2m-3}
(\prod\limits_{j=1}^{s}\frac{1}{Z_{\rho^j}}\big|\Delta_{\bar{\nu}^j}(\rho^j)\big|^2)
\end{split}
\end{equation}
For each $j\in J$,
$\frac{1}{Z_{\rho^j}}|\Delta_{\bar{\nu}^j}(\rho^j)|^{2}=\frac{|\Delta_{\bar{\nu}^j}(\bar{\rho}^j)|^2}{z_{\bar{\rho}^j}
(r+1)^{l(\bar{\rho}^j)}}$ just depends on the type of $\rho^j$. Therefore, they have the same value for any $\rho^j\in [\widetilde{\nu}^j]$. Because they
have the same type $\bar{\nu}^j$,
$\prod_{j=1}^{2m-1}\frac{1}{Z_{\rho^j}}|\Delta_{\bar{\nu}^j}(\rho^j)|^{2}$
is a constant for different $\rho^1\cup\cdots\cup \rho^{2m-1}\in
[\tilde{\nu}^1]\cup\cdots\cup
[\tilde{\nu}^{2m-1}]$, then the last expression in
(\ref{eq:2}) satisfies that
\begin{equation}
\begin{split}
\geq2^{2m-3}(\sum_{\rho^j\in[\tilde{\nu}^j]:j\in
J}\prod\limits_{j=1}^{2m-1}\frac{\big|\Delta_{\bar{\nu}^j}(\rho^j)\big|^2}{Z_{\rho^j}})
(\sum_{\rho^j\in \mathcal{OSP}_{\mu_j}(\Gamma_{*}):j\in
J^{'}}\prod\limits_{j=2m}^{s}\frac{\big|\Delta_{\bar{\nu}^j}(\rho^j)\big|^2}{Z_{\rho^j}}
)\\
\end{split}
\end{equation}
In the above we have used
 $\mathcal{OSP}_{\mu_j}(\Gamma_{*}):=\mathcal{OP}_{\mu_j}(\Gamma_{*})\cap\mathcal{SP}_{\mu_j}(\Gamma_{*})$, and then
\begin{equation}
\begin{split}
&\sum_{\rho^j\in \mathcal{OSP}_{\mu_j}(\Gamma_{*}):j\in
J^{'}}\prod\limits_{j=2m}^{s}\frac{\big|\Delta_{\bar{\nu}^j}(\rho^j)\big|^2}{Z_{\rho^j}}\\
=&\sum_{\rho^j\in
\mathcal{OSP}_{\mu_j}(\Gamma_{*}):j\in J^{'}}\prod\limits_{j=2m}^s
\frac{|\Delta_{\bar{\nu}^{j}}(D_{\rho^{j}})|^2}
{\prod\limits_{c\in\Gamma_{*}}z_{\rho^j(c)}\zeta_{c}^{l(\rho^j(c))}}\\
=&\sum_{\bar{\rho}^j\in \mathcal{OP}_{\mu_j}}
\prod_{j=2m}^{s}\frac{|\Gamma_{*}|^{l(\rho^j)}|\Delta_{\bar{\nu}^j}(z^pt_{\rho^j})|^2}{z_{\bar{\rho}^j}(r+1)^{l(\rho^j)}}\\
=&\prod_{j=2m}^{s}\big(\sum_{\bar{\rho}^j\in \mathcal{OP}_{\mu_j}}
\frac{|\Delta_{\bar{\nu}^j}(z^pt_{\rho^j})|^2}{z_{\bar{\rho}^j}}\big)\\
=&\prod_{j=2m}^s<\Delta_{\bar{\nu}^j},\Delta_{\bar{\nu}^j}>_{\widetilde{S}_{\mu_j}} \ \ (\Delta_{\bar{\nu}^j} \hbox{~is~double})\\
=&1.\\
\end{split}
\end{equation}
Subsequently, equation (\ref{eq:2})
\begin{equation}
\begin{split}
&\geq2^{2m-3}(\sum_{\rho^j\in[\tilde{\nu}^j]:j\in
J}\prod\limits_{j=1}^{2m-1}\frac{\big|\Delta_{\bar{\nu}^j}(\rho^j)\big|^2}{Z_{\rho^j}})\\
&\geq\sum_{\rho^j\in [\tilde{\nu}^j]:j\in J}
\frac{2^{2(m-1)}\prod\limits_{j=1}^{2m-1}|\Delta_{\bar{\nu}^{j}}(D^{+}_{\rho^{j}})|^2}
{\prod\limits_{j=1}^{2m-1}(\prod_{c\in\Gamma_{*}}z_{\rho^j(c)}\zeta_c^{l(\rho^j(c))})} \ \ (D_{\rho^j}^{-}=zD_{\rho^j}^{+})\\
&\geq2^{2(m-1)}\prod\limits_{j=1}^{2m-1}\frac{|\Gamma_{*}|^{l(\rho^j)}\big|\sqrt{{\prod_{c\in\Gamma_{*}}\nu^{j}(c)}/{2}}\big|^2}
{\prod_{c\in\Gamma_{*}}\nu^j(c)(r+1)^{l(\rho^j(c))}}\\
&\geq\frac{1}{2}.
\end{split}
\end{equation}
We have pointed out that
$
\sum_{\rho\in
\mathcal{SP}^1_n(\Gamma_{*})}\frac{1}{\widetilde{Z}_{\rho}}
(\Theta_{i}^{\nu})^{\pm}(\rho)\overline{(\Theta_{i}^{\nu})^{\pm}(\rho)}=1/2,
$
which forces $(\Theta_{i}^{\nu})^{\pm}(\rho)=0$ if $\rho^i\notin
[\tilde{\nu}^i]$ for $i\in J$ or
$\rho^i\notin\mathcal{OSP}_{\mu_i}(\Gamma_{*})$ for $i\in J^{'}$. 
\end{proof}

\begin{cor}
 For $\mu\in\Omega$, $i\in I$ and let $\nu=((\mu_1),(\mu_2),\cdots,(\mu_s))\in \mathcal{SP}^1_n(\Gamma_{*})$. Suppose all $\mu_j$
are even integers,
then for
$\rho=\rho^1\cup\cdots\cup\rho^s\in \mathcal{SP}^1_n{(\Gamma_{*})}$, the values of the irreducible spin characters $(\Theta_{i}^{\nu})^{\pm}$ at the conjugacy classes
$D_{\rho}^{+}$ are given by
\begin{equation}\nonumber
(\Theta_{i}^{\nu})^{\pm}(D_{\rho}^{+})= \left\{\begin{array}{lc}
\pm(\sqrt{-1})^{\frac{n+s-1}{2}}\sqrt{\frac{\mu_1\cdots\mu_s}{2}}
\prod\limits_{k=1}^s\gamma_{i_k}(c^{j_k})K_{\rho},&\rho^i=c_{j_i}^{(\mu_i)}, \\
0,&otherwise,
\end{array}
\right.
\end{equation}
where $K_{\rho}$ is the number of left cosets $T$ of
$\widetilde{\Gamma}_{\mu}$ in $\widetilde{\Gamma}_n$ such that
$(g,t_{\rho})T=T$.
\end{cor}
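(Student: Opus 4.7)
The plan is to specialize part (i) of the preceding theorem to the case where each $\nu^j$ is the single-part partition $(\mu_j)$ with $\mu_j$ even. First I would verify that the hypotheses put $J = \{1,\ldots,s\}$ and $J' = \emptyset$. Since each $\nu^j$ has length one and weight $\mu_j$, its parity is $d(\nu^j) = \mu_j - 1$, which is odd because $\mu_j$ is even. Hence every $\nu^j$ lies in $\mathcal{SP}^1_{\mu_j}(\Gamma_*)$, so the theorem applies with $J = \{1,\ldots,s\}$. In particular the factor $\prod_{j \in J'} \Delta_{\bar{\nu}^j}(t_{\rho^j})$ from the theorem collapses to the empty product~$1$.

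Next I would characterize the admissible $\rho$. The condition $\rho^i \in [\tilde{\nu}^i]$ means that $\rho^i$ is a characteristic partition valued function with underlying ordinary partition $\bar{\nu}^i = (\mu_i)$. Since $\bar{\nu}^i$ has only the single part $\mu_i$, the function $\rho^i$ must be concentrated at a single conjugacy class, that is, $\rho^i = c^{(\mu_i)}_{j_i}$ for some $j_i \in \{0,1,\ldots,r\}$. Under this identification, $l(\rho^k(c)) = 1$ when $c = c^{j_k}$ and equals $0$ otherwise, so the inner product over $\Gamma_*$ collapses to
\[
\prod_{c \in \Gamma_*} \gamma_{i_k}(c)^{l(\rho^k(c))} = \gamma_{i_k}(c^{j_k}).
\]

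Finally I would substitute into the formula of the theorem. We have $\sum_{j \in J}(\mu_j - l(\nu^j)) = n - s$ and $|J| = s$, so the exponent of $\sqrt{-1}$ becomes $\frac{(n-s) + 2s - 1}{2} = \frac{n + s - 1}{2}$. Likewise $\prod_{j \in J} \prod_{c \in \Gamma_*} \nu^j(c) = \mu_1 \cdots \mu_s$, since each $\nu^j$ contributes only the single part $\mu_j$. Assembling these factors together with $K_\rho$ reproduces the claimed expression for $(\Theta_i^\nu)^\pm(D_\rho^+)$, while vanishing for all other $\rho \in \mathcal{SP}^1_n(\Gamma_*)$ is immediate from part (ii) of the theorem. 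No substantial obstacle arises; the argument is essentially bookkeeping, and the only delicate point is the correct description of $[\tilde{\nu}^i]$ for a single-part $\nu^i$.
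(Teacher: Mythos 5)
Your specialization of the preceding theorem is correct and is essentially the paper's own argument: the paper treats the corollary as the same bookkeeping exercise, verifying the value directly from the basic spin character values $(\Delta_{(\mu_j)}^{+})^{(\mu_j)}=(\sqrt{-1})^{\mu_j/2}\sqrt{\mu_j/2}$ and the starred-product formula, while you route through the already-established general formula; the identification of $[\tilde{\nu}^i]$ with $\{c_{j_i}^{(\mu_i)}\}$ and the collapse of the $\gamma$-product are exactly as intended. One point in your favor: the paper's displayed check ends with the exponent $\tfrac{n}{2}$ on $\sqrt{-1}$, having silently dropped the factor $(\sqrt{-1})^{\frac{s-1}{2}}$ arising from $(2\sqrt{-1})^{\frac{s-1}{2}}$, so your exponent $\tfrac{n+s-1}{2}$ is the one consistent with both the corollary's statement and the theorem it specializes.
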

\begin{proof} We just need to check
\begin{equation}
\begin{split}(\Theta_{i}^{\nu})^{\pm}(D_{\rho}^{+})&=
\pm(\sqrt{-1})^{\frac{n}{2}}2^{\frac{s-1}{2}}\sqrt{\frac{\mu_1}{2}}\cdots\sqrt{\frac{\mu_s}{2}}
\prod\limits_{k=1}^s\gamma_{i_k}(c^{j_k})K_{\rho}\\
&=\pm(\sqrt{-1})^{\frac{n}{2}}\sqrt{\frac{\mu_1\cdots\mu_s}{2}}
\prod\limits_{k=1}^s\gamma_{i_k}(c^{j_k})K_{\rho}
\end{split}
\end{equation} 
\end{proof}

In particular, when $\nu=(n)$ (i.e. $s=1$) and $\rho\in \mathcal{SP}^1_n(\Gamma_{*})$ (also see Corollary 4.5 in \cite{FJW})
\begin{equation}\nonumber
(\Theta_{i}^{\nu})^{\pm}(D_{\rho}^{+})= \left\{\begin{array}{lc}
\pm(\sqrt{-1})^{\frac{n}{2}}\sqrt{\frac{n}{2}}\gamma_i(c), &\rho=c^{(n)}, \\
0,&otherwise.
\end{array}
\right.
\end{equation}

\medskip

{\small\noindent{\bf Acknowledgments}
The second named author
gratefully acknowledges the partial support of the Max-Planck Institute for Mathematics in Bonn,
Simons Foundation 
and NSFC 
during this work.
}

\bibliographystyle{spmpsci}      


\end{document}